\newcommand{\N}{\mathbb{N}}
\newcommand{\Z}{\mathbb{Z}}
\newcommand{\R}{\mathbb{R}}
\newcommand{\C}{\mathbb{C}}
\newcommand{\hA}{\hat{A}}
\newcommand{\bin}[1]{\underline{#1}_{_2}}
\newtheorem{theorem}{Theorem}[section]
\newtheorem{lemma}[theorem]{Lemma}
\newtheorem{proposition}[theorem]{Proposition}
\theoremstyle{definition}
\newtheorem{remark}[theorem]{Remark}
\newtheorem{definition}[theorem]{Definition}
\title{Normal distribution of correlation measures of binary sum-of-digits functions}
\author{Jordan Emme\thanks{Laboratoire de Mathématiques d’Orsay, Univ. Paris-Sud, CNRS, Université Paris-Saclay, 91405 Orsay, France} 
\and
 Pascal Hubert \thanks{Aix-Marseille Université, CNRS, Centrale Marseille, I2M, UMR 7373, 13453 Marseille,
France}}
\date{}
\begin{document}

\maketitle

\begin{abstract}
In this paper we study correlation measures introduced in \cite{emme_asymptotic_2017}. Denote by $\mu_a(d)$ the asymptotic density of the set $\mathcal{E}_{a,d}=\{n \in \N, \ s_2(n+a)-s_2(n)=d\}$ (where $s_2$ is the sum-of-digits function in base 2). Then, for any point $X$ in $\{0,1\}^\N$, define the integer sequence $\left(a_X (n)\right)_{n\in \N}$ such that the binary decomposition of $a_X (n) $ is the prefix of length $n$ of $X$. We prove that for \textit{any} shift-invariant ergodic probability measure $\nu$ on $\{0,1\}^\N$, the sequence $\left(\mu_{a_X(n)}\right)_{n \in \N}$ satisfies a central limit theorem. This result was proven in the case where $\nu$ is the symmetric Bernoulli measure in \cite{emme_central_2018}.
\end{abstract}

\tableofcontents

\section{Introduction}
\subsection{Background}

In this paper, we are interested in the asymptotic behaviour of a certain family of probability measures defined via sum-of-digits functions. We introduce these measures quickly in what follows. 

For any non-negative integer $n$, there exists a tuple $(n_0,...,n_{k-1})$ in $\{0,1\}^k$ such that:
$$
n = \sum_{j=0}^{k-1}n_j2^j.
$$
We define the sum-of-digits function $s_2:\N \rightarrow \N$ by
$$
s_2(n)=\sum_{j=0}^{k-1}n_j
$$ 
where $\N$ denotes the set of non-negative integers.
We are interested in the following family of sets
$$
\forall (a,d)\in \N\times\Z,\quad \mathcal{E}_{a,d}:=\left\{n\in \N ,\ s_2(n+a)-s_2(n)=d \right\} 
$$
and we know from \cite{besineau_independance_1973} and \cite{emme_asymptotic_2017} that these sets admit asymptotic densities. So let us define, for every $a$ in $\N$, the probability measure $\mu_a$.

\begin{definition}
$$
\forall (a,d)\in \N\times\Z,\quad \mu_a(d):=\lim_{N\rightarrow+\infty}\frac1N \left|  \mathcal{E}_{a,d}\cap\{0,...,N-1\}\right|.
$$
Note that since for every $a$ in $\N$, the set $\{\mathcal{E}_{a,d}\}_{d\in\Z}$ is a partition of $\N$, then $\mu_a$ is a probability measure on $\Z$.
\end{definition}

We call them correlation measures for the following reason. In \cite{besineau_independance_1973}, the author studied the statistical independence of sets defined by sum-of-digits functions. To that end, the classical tool in number theory is the correlation function of an arithmetic function. More precisely, let $\alpha$ be a real parameter and define the following map
$$
\begin{array}{llll}
f_{\alpha} :& \N &\longrightarrow &\C\\
            & n  &\longmapsto     &e^{2i\pi \alpha s_2(n)}
\end{array}
$$
and its autocorrelation function $\gamma_{f_{\alpha}}$ by
$$
\forall a \in \N,\quad \gamma_{f_{\alpha}}(a)=\lim_{N\rightarrow+\infty}\frac1N\sum_{n=0}^{N-1}f(n+a)\overline{f(n)}.
$$
Now notice that with our definitions, 
$$
\forall a \in \N,\quad \gamma_{f_{\alpha}}(a)=\sum_{d\in\Z}e^{2i\pi \alpha d}\mu_a(d).
$$

The study of sum-of-digits functions and of their statistical distribution has proven very fruitful in particular in number theory and probability. We can for instance quote the different classical works \cite{katai_distribution_1968} and  \cite{diaconis_distribution_1977} where it is respectively proven that the sum-of-digits functions are asymptotically normally distributed in any integer base and in binary via different methods. More recently, the authors of \cite{drmota_distribution_1998} and \cite{dumont_gaussian_1997} proved a local limit theorem independently for sum-of-digit functions in the general case. One of the most remarkable result on the subject of sum-of-digits functions in recent years is the solution of a problem of Gelfond on prime numbers by  Mauduit and Rivat in \cite{mauduit_sur_2010}. For a more thorough introduction on the subject, we advise the survey \cite{chen_distribution_2014}.

Our study differs in that the measures we study are different densities as those which are usually studied. Furthermore, it is usual to consider the sum-of-digits function as random variables by endowing sets of the form $\{0,...,n-1\}$ with the uniform probability measure, and then by increasing $n$ to get asymptotic properties. Here we do not want to be limited by the choice of the uniform probability measure on $\{0,...,n-1\}$ so we start by choosing a shift invariant ergodic probability measure on $\{0,1\}^\N$ and we look at the asymptotic properties of our measures by taking a sequence of prefixes of increasing length of a generic point for the ergodic measure. In the case where this measure is the symmetric Bernoulli measure, it is equivalent to looking at the asymptotic correlation measures where $s_2$ is seen as a random variable on $\{2^n,...,2^{n+1}-1\}$ endowed with the uniform probability measure. We proved that $\mu_a$ is generically normally distributed in this case in \cite{emme_central_2018}.

These new correlation measures hold some information that are yet to be fully understood in order to solve some number theory problems. For instance, we would like to mention a conjecture due to Cusick which he formulated shortly after working on a similar problem in \cite{cusick_combinatorial_2011}, though in a different formalism.

Let $a$ be a positive integer. Denote by $c_a$ the quantity
$$
c_a=\sum_{d\geq 0}\mu_a(d).
$$
It is conjectured that
$$\forall a \in \N, \quad c_a > \frac12$$and
$$\displaystyle\liminf_{a \in \N}=\frac12.$$

It was proved in \cite{morgenbesser_reverse_2012} that if the binary representation of $b$ is equal to the mirrored image of the binary representation of $a$, then $c_a=c_b$. It was also proved in \cite{drmota_conjecture_2016} that $\frac12$ is an accumulation point for $c_a$ by taking the sequence $\left (a_n\right )_{n\in\N}$ defined, for every $n$, by $a_n=\sum_{k=0}^n 4^k$. The fact that $\frac12$ is an accumulation point is also a consequence of the central limit theorem in \cite{emme_central_2018} and of the main result of this paper. Furthermore, our papers show, in particular, that $\frac12$ is an accumulation point for many sequences $(a_n)_{n\in\N}$ in a sense that will be made clear in the next subsection.

We would also like to point out that the probability measures $\mu_a$ hold some similarity with $2$-automatic sequences as defined in \cite{allouche_automatic_2003}. In particular, we recall the definition of Stern's sequence $(S_n)_{n\in\N}$ which is given by
$$
S_0=1;\qquad S_{2n}=S_{n}; \qquad S_{2n+1}=S_{n}+S_{n+1}
$$
and whose recurrence relations are quite close to the ones defining the correlation measures $\mu_a$ (as given at the beginning of Subsection~\ref{s.prop_mu}. We would like to quote the recent works in \cite{bettin_statistical_2017} where the authors proved a central limit theorem for the logarithms of Stern's sequence using transfer operators, but only in the case of the uniform law on the numbers in the set $\{0,...,n-1\}$.

\subsection{Result}

We are interested in the asymptotic behaviour of such correlation measures in the following sense:
\begin{itemize}
\item Let us consider the measured dynamical system $(\{0,1\}^\N,\mathcal{B},\nu,\sigma)$ where $\sigma$ is the left-shift on sequences in $\{0,1\}^\N$ and $\nu$ is a shift-invariant ergodic probability measure;

 \item For any sequence $X$ in $\{0,1\}^\N$, define $a_X(n)=\displaystyle\sum_{k=0}^n X_k \cdot 2^k$;
\end{itemize}
we wish to understand the behaviour of $\left(\mu_{a_X(n)} \right)_{n\in \N} $ for certain points $X$ which are generic for the measure $\nu$. We have shown in \cite{emme_central_2018} that if we take $\nu$ to be the symmetric Bernoulli measure, then  $\left(\mu_{a_X(n)} \right)_{n\in \N} $ satisfies a central limit theorem. In this paper we show the statement to be still true whenever $\nu$ is a shift-invariant ergodic measure. It should be noted that the method used here is very different from the one in \cite{emme_central_2018}, where the proof heavily relies on estimates of Bernoulli correlations. The central limit theorem being often obtained through some sort of independence, and having done only the case of Bernoulli measure by using this independence heavily, one could have expected that this result could only follow under assumptions of some sort on the measure $\nu$ like some mixing properties or asking for it to be a Gibbs measure. In this paper however, the only tool needed is Birkhoff's ergodic theorem for it to work in full generality.

\begin{theorem}\label{t.principal}
Let $\nu$ be a shift-invariant ergodic measure on $\{0,1\}^\N$ different from $\delta_{0^\infty}$ and $\delta_{1^\infty}$. For any $i$ in $\N$, let 
$$\mathcal{F}_i=\int \mathbbm{1}_{[0]}\times\mathbbm{1}_{[1]}\circ\sigma^i+
\mathbbm{1}_{[1]}\times\mathbbm{1}_{[0]}\circ\sigma^id\nu$$
and
$$
V_\nu=\sum_{i=1}^\infty \frac{\mathcal{F}_i}{2^i}.
$$
Then, defining the probability measure $\widetilde{\mu}_{X,n}$ on $\R$ by
$$
\forall x \in \R, \quad \widetilde{\mu}_{X,n}(x)=\mu_{a_X(n)}(\sqrt{V_\nu n}\cdot x)
$$
yields
$$
\widetilde{\mu}_{X,n}\underset{n\rightarrow +\infty}{\overset{weak}{\longrightarrow}}\varphi
 $$
 where
 $$
 \begin{array}{cccc}
  \varphi :& \R &\rightarrow &\R \\
           &  t & \mapsto & \frac{1}{\sqrt{2\pi}}e^{\frac{-1}{2}t^2}
 \end{array}
 $$

\end{theorem}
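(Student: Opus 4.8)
The plan is to fix $a=a_X(n)$ and study the law of $d=s_2(m+a)-s_2(m)$ as $m$ ranges over $\N$ with its natural density, letting $n\to\infty$ afterwards. Writing the addition $m+a$ in base $2$ with carries $c_0=0$ and $c_{j+1}=1$ exactly when at least two of $a_j,m_j,c_j$ equal $1$, one gets $d=\sum_{j\ge0}Z_j$ with $Z_j:=a_j+c_j-2c_{j+1}\in\{-1,0,1\}$. Since the density on $m$ makes $(m_j)_j$ behave like independent fair coins, $(c_j)_j$ is a two-state Markov chain whose step $j$ is governed by $a_j=X_j$, and I would record the conditional law of $c_{j+1}$ given $c_j$ in the two cases $a_j=0,1$. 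A one-line computation then yields the crucial identity $\E[Z_j\mid c_0,\dots,c_j]=0$: the sequence $(Z_j)$ is a martingale difference sequence for the carry filtration. In particular the $Z_j$ are pairwise uncorrelated, so $d$ is centred and $\mathrm{Var}(d)=\sum_j\E[Z_j^2]$, with no covariance cross-terms to estimate.

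The individual variances are explicit. Since $Z_j^2\in\{0,1\}$, one finds $\E[Z_j^2\mid c_0,\dots,c_j]=\mathbbm{1}[X_j\ne c_j]$, and using that $P(c_j=1)$ equals the fractional part $(a\bmod 2^j)/2^j=\sum_{\ell\ge1}X_{j-\ell}2^{-\ell}$ one gets, up to a boundary term that is summably small in $j$,
\[
\E[Z_j^2]=\sum_{\ell\ge1}2^{-\ell}\,\mathbbm{1}[X_{j-\ell}\ne X_j].
\]
Summing over $j\le n$, reindexing by the gap $\ell$, and applying Birkhoff's ergodic theorem to each average $\tfrac1n\sum_i\mathbbm{1}[X_i\ne X_{i+\ell}]$ yields, for $\nu$-almost every $X$,
\[
\frac{\mathrm{Var}(d)}{n}\longrightarrow\sum_{\ell\ge1}2^{-\ell}\,\nu(X_0\ne X_\ell)=\sum_{\ell\ge1}\frac{\mathcal{F}_\ell}{2^\ell}=V_\nu .
\]
This is the only point where a theorem about $\nu$ is invoked, and Birkhoff suffices because the relevant functions of $X$ are exponentially localised; moreover $V_\nu>0$ precisely when $\nu$ is neither $\delta_{0^\infty}$ nor $\delta_{1^\infty}$, which is the standing hypothesis.

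With the martingale structure in hand I would conclude by a martingale central limit theorem. Its Lindeberg condition is automatic because $|Z_j|\le1$ while the normalisation $\sqrt{V_\nu n}$ tends to infinity. The remaining hypothesis is the convergence in probability of the normalised conditional variance, i.e. $\tfrac1n\sum_{j\le n}\E[Z_j^2\mid c_0,\dots,c_j]=\tfrac1n\sum_{j\le n}\mathbbm{1}[X_j\ne c_j]\to V_\nu$. Its expectation is exactly the quantity computed in the previous step and converges to $V_\nu$ by Birkhoff, while its variance is $O(1/n)$ because the carry chain mixes geometrically: each transition contracts the difference of two laws on $\{0,1\}$ by the factor $\tfrac12$, \emph{uniformly in the driving bits} $X_j$, so the covariances $\mathrm{Cov}(\mathbbm{1}[X_i\ne c_i],\mathbbm{1}[X_j\ne c_j])$ decay like $2^{-|i-j|}$. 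Hence the conditional variance concentrates at $V_\nu$ for $\nu$-almost every $X$, the martingale CLT gives $d/\sqrt{V_\nu n}\to\mathcal{N}(0,1)$, and therefore $\widehat{\mu}_{a_X(n)}(t/\sqrt{V_\nu n})\to e^{-t^2/2}$, which is the asserted weak convergence of $\widetilde{\mu}_{X,n}$ to $\varphi$.

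The main obstacle is the verification of this conditional-variance condition: one must show that the random functional $\tfrac1n\sum_j\mathbbm{1}[X_j\ne c_j]$ of the carry chain concentrates, which relies on the uniform geometric mixing coming from the contraction factor $\tfrac12$ rather than on any mixing property of $\nu$ itself — and this is exactly what allows the statement to hold for an arbitrary ergodic $\nu$. Two further technical points must be handled: the convergence must be arranged for $\nu$-almost every $X$ simultaneously (the Birkhoff averages for all gaps $\ell$), and the infinitely many trailing zero bits of $a$ produce a carry tail beyond the top bit whose contribution to the mean and to the variance is $O(1)$, hence negligible at scale $\sqrt n$. Once these are in place the proof is complete.
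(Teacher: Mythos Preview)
Your argument is correct and follows a genuinely different route from the paper. The paper proves the theorem by the method of moments: it writes $\widehat{\mu}_{a_X(n)}$ as a product of $2\times2$ matrices $\hA_{X_j}(\theta)$, Taylor-expands each factor into $I_{X_j}+i\theta\alpha_{X_j}-\tfrac12\theta^2\beta_{X_j}+\cdots$, classifies the resulting products by ``type'' $(\alpha^p,\beta^q)$, shows by a submultiplicativity/weight argument that only the pure $\beta$-types survive the normalisation, and then establishes by a rather intricate induction (Lemmas~\ref{l.matrix_form} and~\ref{l.main}) that the $(\alpha^0,\beta^r)$ contribution is, up to $O(n^{r-1})+O(n^r2^{-P})$, a sum $S_n(X,p_1,\dots,p_r)$ of products of the indicators $f_{p}=\mathbbm{1}[X_0\ne X_p]$ along the orbit, to which a multi-variable Birkhoff average (Lemma~\ref{l.ergodic_k}) applies. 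Your approach bypasses all of this by recognising the probabilistic structure directly: the identity $d=\sum_j Z_j$ with $Z_j=a_j+c_j-2c_{j+1}$, the one-line check that $\E[Z_j\mid c_j]=0$, and the observation $\E[Z_j^2\mid c_j]=\mathbbm{1}[X_j\ne c_j]$ replace the matrix calculus, and the martingale CLT replaces the moment induction. The only dynamical input you need is the same as the paper's --- Birkhoff for the gap indicators $\mathbbm{1}[X_0\ne X_\ell]$ --- while the concentration of the conditional variance comes from the uniform Dobrushin contraction factor $\tfrac12$ of the carry transition matrices (which are exactly the paper's $I_0,I_1$). What your route buys is conceptual clarity and brevity; what the paper's route buys is a fully self-contained argument that does not import a CLT as a black box and makes the matrix structure (used elsewhere in \cite{emme_asymptotic_2017}, \cite{emme_central_2018}) explicit. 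Two small points you should make precise when writing it up: the identification of $\mu_a$ with the law of $d$ under i.i.d.\ Bernoulli$(\tfrac12)$ bits of $m$ (this is equivalent to the recurrence in Proposition~\ref{p.rec} and deserves one sentence), and the exact version of the martingale array CLT you invoke, so that the triangular-array dependence on $n$ and the geometric tail beyond bit $n$ are handled cleanly.
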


\begin{remark}
We would like to underline the fact even though, for simplicity and aesthetic reasons, this theorem is written in a way that involves ergodic measures, our proof works for a more general setting. Indeed, the theorem remains true for any element of the full shift $X$ such that for any $i$, the ergodic averages of the functions $\mathbbm{1}_{[0]}\times\mathbbm{1}_{[1]}\circ\sigma^i+
\mathbbm{1}_{[1]}\times\mathbbm{1}_{[0]}\circ\sigma^i$ converge along the orbit of $X$. This is a weaker assumption. Our theorem could then be viewed as a corollary of this statement, using Birkhoff's ergodic theorem.
\end{remark}

\subsection{Outline of the paper}

The goal of this paper is to prove Theorem~\ref{t.principal} using the moment's method, due to Markov in \cite{markov_demonstration_1913}, that is, computing the moments of $\widetilde\mu_{X,n}$ and showing that they converge towards the moments of the normal law $\mathcal{N}(0,1)$.

We start in Subsection~\ref{s.prop_mu} by recalling the main properties of the measures $\mu_a$ which are useful to our study. Namely the recurrence relations satisfied by the correlation measures $\mu_{a_X(n)}$ and the matricial representation of their characteristic functions given by these recurrence relations. For a more in depth study of those measures, we refer the reader to \cite{emme_asymptotic_2017}.

We recall shortly in Subsection~\ref{s.variance} how to compute the variance of $\mu_a$. This has to be done in order to know the constant of renormalisation for a subsequence of $\left(\mu_a\right)_{a \in \N}$ so as to get a central limit theorem. Computations are exposed here briefly and are done more thoroughly in \cite{emme_central_2018}.

In Section~\ref{s.rappel_contrib_moments} we recall how we compute the different moments: given the expression of the characteristic function as a product of matrices, we take the Taylor expansion of every matrix and compute the moments by classifying the terms involved as products of matrices. We study how the matrices give importances to some terms or not so as to know what will be killed by the renormalisation and which terms to focus on. In particular, this section gives all the tools to show that the moments of odd order converge towards 0, which is explained in Subsection~\ref{s.moments_impairs}.

Section~\ref{s.moments} is devoted to studying all the moments of the renormalised law. We wish to show that all the moments converge towards the moments of the Normal law $\mathcal{N}(0,1)$. We explain what are the quantities we wish to study in order to show that.


We underline the fact that Subsection~\ref{s.moments_2r}, is the most crucial and technical part of this paper. It is also the part of the paper where we truly use new techniques that do not appear in \cite{emme_central_2018}. In it, we show that the moments of even order of the renormalised law go towards the desired quantity; namely that the moments of order $2r$ go to $\frac{(2r)!}{2^r\cdot r!}$ at the limit.

\subsection*{Acknowledgements}
We would like to thank Sébastien Gouëzel for his interest in this problem and his decisive advice: being able to go through the computations of the moment of order 4 indeed gave us all the necessary arguments for the computation of the moments of any even order.

\section{The correlation measures $\mu_a$}\label{s.rappel_mu}

\subsection{Main properties}\label{s.prop_mu}

We start by recalling some of the most important properties of the correlation measures $\mu_a$. The most important one is the following:

\begin{proposition}\label{p.rec}
The correlation measures $\mu_a$ satisfy the following recurrence relations.
\begin{align}
\forall a \in \N,\quad \mu_{2a}&=\mu_a \label{e.rec_paire}\\
\forall (a,d)\in \N\times\Z,\quad \mu_{2a+1}(d)&=\frac12\left(\mu_a(d-1)+\mu_{a+1}(d+1) \right) \label{e.rec_impaire}
\end{align}

\end{proposition}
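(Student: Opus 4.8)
The plan is to reduce everything to the two elementary identities $s_2(2k)=s_2(k)$ and $s_2(2k+1)=s_2(k)+1$, valid for all $k\in\N$, and then to split the counting set $\{0,\dots,N-1\}$ according to the parity of $n$. The existence of all the densities involved is already guaranteed by the results recalled above, so no independent convergence argument is needed; the content of the proposition is purely the combinatorial identity obtained by this parity splitting.

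First I would establish \eqref{e.rec_paire}. Writing any $n$ as $2m$ or $2m+1$, a direct application of the two identities shows that in either case
$$
s_2(n+2a)-s_2(n)=s_2(m+a)-s_2(m),\qquad m=\lfloor n/2\rfloor,
$$
since the extra $+1$ arising in the odd case appears in both $s_2(n+2a)$ and $s_2(n)$ and cancels in the difference. Hence $n\in\mathcal{E}_{2a,d}$ if and only if $\lfloor n/2\rfloor\in\mathcal{E}_{a,d}$. Because each $m$ is the integer part of $n/2$ for exactly two consecutive integers $n$, the counting function $|\mathcal{E}_{2a,d}\cap\{0,\dots,N-1\}|$ equals $2\,|\mathcal{E}_{a,d}\cap\{0,\dots,\lfloor N/2\rfloor-1\}|$ up to a bounded boundary error; dividing by $N$ and letting $N\to\infty$ gives $\mu_{2a}(d)=\mu_a(d)$.

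For \eqref{e.rec_impaire} I would again split by parity, but now the two cases behave differently because of the carry. If $n=2m$ then $n+2a+1=2(m+a)+1$, so the identities give $s_2(n+2a+1)-s_2(n)=\bigl(s_2(m+a)-s_2(m)\bigr)+1$; thus the even integers contributing to $\mathcal{E}_{2a+1,d}$ are exactly those $n=2m$ with $m\in\mathcal{E}_{a,d-1}$. If $n=2m+1$ then $n+2a+1=2(m+a+1)$, and the identities give $s_2(n+2a+1)-s_2(n)=\bigl(s_2(m+a+1)-s_2(m)\bigr)-1$; thus the odd integers contributing are those $n=2m+1$ with $m\in\mathcal{E}_{a+1,d+1}$.

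Finally I would translate these two exact descriptions into densities. The even (resp. odd) integers below $N$ that lie in $\mathcal{E}_{2a+1,d}$ are in bijection, via $n\mapsto m=\lfloor n/2\rfloor$, with the elements of $\mathcal{E}_{a,d-1}$ below $N/2$ (resp. of $\mathcal{E}_{a+1,d+1}$ below $N/2$); since for a fixed parity this map is a bijection, each family contributes, after division by $N$, a density equal to one half of the corresponding $\mu$. Summing the two disjoint contributions yields
$$
\mu_{2a+1}(d)=\tfrac12\bigl(\mu_a(d-1)+\mu_{a+1}(d+1)\bigr).
$$
The only point requiring genuine care is the bookkeeping of the factor $\tfrac12$ together with the bounded boundary errors in the counting estimates; everything else is a mechanical consequence of the two base-$2$ identities for $s_2$.
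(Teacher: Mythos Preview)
Your argument is correct and is exactly the approach the paper alludes to: the paper merely cites \cite{emme_central_2018} and remarks that the result ``follows from simple recurrence relations on the sets $\mathcal{E}_{a,d}$'', which is precisely your parity split combined with the identities $s_2(2k)=s_2(k)$ and $s_2(2k+1)=s_2(k)+1$. There is nothing to add.
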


\begin{proof}
%
The proof of this proposition is done in \cite{emme_central_2018}. It follows from simple recurrence relations on the sets $\mathcal{E}_{a,d}$ defined in the introduction.

\end{proof}

Let $a$ be in  $\N$ with $\bin{a}=a_n...a_0$. The characteristic function of $\mu_a$, denoted $\widehat{\mu_a}$ is defined in the standard way:
$$
\forall \theta \in [0,2\pi),\ \widehat{\mu_a}(\theta)=\sum_{d \in \Z}e^{id\theta}\mu_a(d)
$$

\begin{proposition}\label{p.charac_matrix}
The characteristic function $\widehat{\mu}_a:[0,2\pi)\rightarrow \C$ is given by:
$$
\forall \theta \in [0,2\pi),\ \widehat{\mu}_a(\theta)=\begin{pmatrix}1 & 0 \end{pmatrix} \; \hA_{a_0}(\theta)\cdots \hA_{a_{n-1}}(\theta) \hA_{a_n}(\theta)\begin{pmatrix} \widehat{\mu_0}(\theta) \\ \widehat{\mu_1}(\theta) \end{pmatrix}
$$
where
$$
  \forall \theta \in [0,2\pi), \ 
  \hA_0(\theta):= \begin{pmatrix} 1 &  0  \\\frac12 e^{i \theta}& \frac12 e^{-i \theta} \end{pmatrix}, \ 
   \hA_1(\theta):= \begin{pmatrix} \frac12 e^{i \theta} & \frac12 e^{-i \theta}\\  0 & 1 \end{pmatrix}.
$$
\end{proposition}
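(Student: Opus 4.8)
The plan is to lift the two scalar recurrences of Proposition~\ref{p.rec} to the level of characteristic functions, and then to bundle the pair $\widehat{\mu_a}$ and $\widehat{\mu_{a+1}}$ into a single column vector, so that reading one more binary digit of $a$ becomes left-multiplication by one of the two matrices $\hA_0(\theta)$ or $\hA_1(\theta)$. The stated formula then drops out by iterating this vector recurrence digit by digit and reading off the first coordinate at the end.

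First I would transfer the recurrences to $\widehat{\mu}$. Since $\widehat{\mu_a}(\theta)=\sum_{d\in\Z} e^{id\theta}\mu_a(d)$, relation~\eqref{e.rec_paire} gives immediately $\widehat{\mu_{2a}}=\widehat{\mu_a}$, while inserting~\eqref{e.rec_impaire} and reindexing each of the two resulting sums yields
$$
\widehat{\mu_{2a+1}}(\theta)=\tfrac12\left(e^{i\theta}\widehat{\mu_a}(\theta)+e^{-i\theta}\widehat{\mu_{a+1}}(\theta)\right).
$$

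Next I would introduce the vector $V_a(\theta)=\begin{pmatrix}\widehat{\mu_a}(\theta)\\ \widehat{\mu_{a+1}}(\theta)\end{pmatrix}$ and check the two one-step identities. For an even index, $\widehat{\mu_{2a}}=\widehat{\mu_a}$ together with the display above gives $V_{2a}=\hA_0(\theta)V_a$. For an odd index the first coordinate is again the displayed formula, and the second coordinate uses $2a+2=2(a+1)$ together with~\eqref{e.rec_paire} to get $\widehat{\mu_{2a+2}}=\widehat{\mu_{a+1}}$; collecting these gives $V_{2a+1}=\hA_1(\theta)V_a$. The essential point is that both matrices act on the \emph{same} vector $V_a$ of two consecutive values, which is precisely what makes a clean product formula possible, and which forces one to carry $\widehat{\mu_{a+1}}$ alongside $\widehat{\mu_a}$ rather than working with $\widehat{\mu_a}$ in isolation.

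Finally, writing $a=2a'+a_0$ with $a'$ having binary expansion $a_n\dots a_1$, the two identities combine into $V_a=\hA_{a_0}(\theta)V_{a'}$; an easy induction on the number of digits, stripping off $a_0,a_1,\dots,a_n$ in turn and terminating at $V_0=\begin{pmatrix}\widehat{\mu_0}\\ \widehat{\mu_1}\end{pmatrix}$, yields $V_a=\hA_{a_0}(\theta)\cdots\hA_{a_n}(\theta)V_0$. Left-multiplying by $\begin{pmatrix}1&0\end{pmatrix}$ extracts the first coordinate $\widehat{\mu_a}$ and produces the claimed expression. The only delicate point is purely bookkeeping: tracking the digit indexing so that the leading digit $a_n$ indeed produces the last factor $\hA_{a_n}(\theta)$ and the recursion ends at $V_0$. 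There is no analytic obstacle, since everything follows from the finite, exact recurrences of Proposition~\ref{p.rec}.
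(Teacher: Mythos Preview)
Your proof is correct and follows exactly the approach the paper indicates: the paper's own proof is the single sentence ``Computing the Fourier transform of the measures $\mu_a$ and using Proposition~\ref{p.rec} yields the result,'' and what you have written is precisely a careful unpacking of that sentence. The vector $V_a=\begin{pmatrix}\widehat{\mu_a}\\ \widehat{\mu_{a+1}}\end{pmatrix}$ and the one-step identities $V_{2a}=\hA_0 V_a$, $V_{2a+1}=\hA_1 V_a$ are the natural way to organise the two Fourier-transformed recurrences, and the digit-by-digit induction terminating at $V_0$ is the intended argument.
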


\begin{proof}

Computing the Fourier transform of the measures $\mu_a$ and using Proposition \ref{p.rec} yields the result.
\end{proof}

\begin{proposition}
For any $a$ in $\N$, the measure $\mu_a$ has 0 mean.
\end{proposition}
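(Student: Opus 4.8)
The plan is to show that the mean $m(a):=\sum_{d\in\Z}d\,\mu_a(d)$ vanishes for every $a$, working directly from the matrix representation of Proposition~\ref{p.charac_matrix}. First one must check that $m(a)$ is finite and that $\widehat{\mu_a}$ is differentiable at $0$ with $\widehat{\mu_a}'(0)=i\,m(a)$: indeed $\mu_0=\delta_0$, while $\mu_1$ is given explicitly by $\mu_1(1-k)=2^{-k-1}$ for $k\ge 0$, and both have exponentially decaying tails; the recurrences of Proposition~\ref{p.rec} (a $\pm1$ shift followed by averaging) preserve such tails, so by induction every $\mu_a$ has finite moments of all orders. The same two base computations give $m(0)=0$ and $m(1)=1-\sum_{k\ge0}k\,2^{-k-1}=0$, hence $\widehat{\mu_0}'(0)=\widehat{\mu_1}'(0)=0$.

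The key observation is structural. Write $\mathbf{1}=\begin{pmatrix}1\\1\end{pmatrix}$ and $w(\theta)=\begin{pmatrix}\widehat{\mu_0}(\theta)\\ \widehat{\mu_1}(\theta)\end{pmatrix}$, so that $w(0)=\mathbf{1}$ and, by the base cases, $w'(0)=\begin{pmatrix}i\,m(0)\\ i\,m(1)\end{pmatrix}=\begin{pmatrix}0\\0\end{pmatrix}$. At $\theta=0$ the matrices $\hA_0(0)$ and $\hA_1(0)$ are row-stochastic, so each fixes $\mathbf1$; and a one-line computation gives $\hA_0'(0)\mathbf1=\hA_1'(0)\mathbf1=\begin{pmatrix}0\\0\end{pmatrix}$. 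Thus the all-ones column vector is simultaneously a right $1$-eigenvector of $\hA_b(0)$ and in the kernel of $\hA_b'(0)$ for $b\in\{0,1\}$, which is precisely the algebraic shadow of conservation of mass together with mean zero.

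Now I would differentiate the identity of Proposition~\ref{p.charac_matrix} at $\theta=0$ using the Leibniz rule on the product $\hA_{a_0}(\theta)\cdots\hA_{a_n}(\theta)\,w(\theta)$. The single term carrying $w'(0)$ vanishes since $w'(0)=0$; in every remaining term the derivative falls on one factor $\hA_{a_k}'(0)$, whose right-hand neighbours $\hA_{a_{k+1}}(0)\cdots\hA_{a_n}(0)$ send $w(0)=\mathbf1$ to $\mathbf1$ by stochasticity, after which $\hA_{a_k}'(0)\mathbf1=0$ annihilates the term. Hence the whole product has vanishing derivative at $0$, so $\widehat{\mu_a}'(0)=0$ and therefore $m(a)=0$.

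The only genuine work is the bookkeeping in this Leibniz expansion, namely verifying that the all-ones vector really does propagate to the left through each trailing product of stochastic matrices, together with the preliminary justification that the first moment is finite so that the identity $\widehat{\mu_a}'(0)=i\,m(a)$ is legitimate; both are routine. As an alternative avoiding characteristic functions entirely, one can argue directly on the means: Proposition~\ref{p.rec} gives $m(2a)=m(a)$ and, after reindexing and using $\sum_d\mu_a(d)=1$ so that the $+1$ and $-1$ contributions cancel, $m(2a+1)=\tfrac12\big(m(a)+m(a+1)\big)$; a strong induction from $m(0)=m(1)=0$ then closes, the only subtlety being that the odd case also involves $a+1$, which for $a\ge 3$ is still smaller than $2a+1$, so both values fall under the induction hypothesis.
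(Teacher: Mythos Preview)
Your proposal is correct. In fact the ``alternative'' you sketch in your final paragraph --- checking $m(0)=m(1)=0$ and then running the recurrences $m(2a)=m(a)$, $m(2a+1)=\tfrac12\big(m(a)+m(a+1)\big)$ by strong induction --- is precisely the paper's own proof. (One small quibble: you write that $a+1<2a+1$ ``for $a\ge 3$'', but this already holds for every $a\ge 1$, so the induction goes through from the base cases $a=0,1$ with no gap.)

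Your main argument, differentiating the matrix product via the Leibniz rule, is a genuinely different route. It is more elaborate than needed here, but it is the right structural picture: the observations $\hA_b(0)\mathbf{1}=\mathbf{1}$ and $\hA_b'(0)\mathbf{1}=0$ are exactly the identities $I_b\mathbf{1}=\mathbf{1}$ and $\alpha_b\mathbf{1}=0$ that the paper records in Subsection~\ref{s.variance} and then exploits repeatedly for the higher moments. So what you lose in economy for this one proposition you gain in setting up the machinery that drives the rest of the paper; the recurrence-on-means argument, by contrast, is quicker but does not generalise as cleanly to the variance and higher-order computations.
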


\begin{proof}
We start by noticing that $\mu_0$ and $\mu_1$ both have 0 mean. This is obvious for $\mu_0$ since $\mu_0=\delta_0$. A simple calculation yields $\mu_1=\sum_{d\leq 1}\left(\frac{1}{2}\right)^{2-d}\delta_d$. We can the compute the mean of $\mu_1$ and find 0. Computing the means of $\mu_a$ with the recurrence relations of Proposition~\ref{p.rec} yields the result.
\end{proof}

For another proof of this statement, the reader can refer to \cite[Lemma 3.2.3]{emme_asymptotic_2017}.

\subsection{Computation of the generic variance}\label{s.variance}

In this section we use the expression of the characteristic function from Proposition~\ref{p.charac_matrix} in order to compute the variance of $\mu_a$. Recall that the variance of $\mu_a$ is the opposite of the second derivative of $\widehat{\mu}_a$ in 0. We wish to compute the Taylor expansion of the characteristic function at order 2 in order to get the variance.
A quick computation yields
$$
\forall \theta \in [0, 2\pi),\quad \widehat{\mu_0}(\theta)=1,\quad \widehat{\mu_1}(\theta)=\frac{e^{i\theta}}{2-e^{-i\theta}}
$$
and so,
$$
\forall \theta \in [0,2\pi),\ \widehat{\mu_a}(\theta)=\begin{pmatrix}1 & 0 \end{pmatrix} \; \hA_{a_0} \cdots \hA_{a_{n-1}} \hA_{a_n}\begin{pmatrix} 1 \\ \frac{e^{i\theta}}{2-e^{-i\theta}}\end{pmatrix}.
$$
Note that 
$$
\frac{e^{i\theta}}{2-e^{-i\theta}} =1-\theta^2+O(\theta^3).
$$
Let us now define the matrices playing a role in the Taylor expansion of $\widehat{\mu_a}$ near 0.
$$
  I_0 = \begin{pmatrix}1 &0 \\\frac12&\frac12\end{pmatrix}, \quad\; 
  \alpha_0 = \frac{1}2 \begin{pmatrix}0&0\\ 1&-1\end{pmatrix}, \quad\; 
  \beta_0 = \frac12 \begin{pmatrix}0&0\\ 1&1\end{pmatrix}, 
$$
$$
  I_1 = \begin{pmatrix}\frac12 &\frac12\\0&1\end{pmatrix}, \quad\; 
  \alpha_1 = \frac{1}2 \begin{pmatrix}1&-1\\0&0\end{pmatrix}, \quad\; 
  \beta_1 = \frac12 \begin{pmatrix}1&1\\0&0\end{pmatrix}.
$$
Indeed, we have:

$$
\hA_j(\theta)=I_j + i\theta \alpha_j-\frac12 \theta^2 \beta_j + O(\theta^3).
$$
with $j \in \{0,1\}$.

Recall that, since $\mu_a$ has 0 mean,
\[
 \widehat{\mu}_a(\theta)=\widehat{\mu}_a(0)-\frac{\text{Var}(\mu_a)}{2}\theta^2+o(\theta^2)
\]
hence, in order to get the variance of $\mu_a$, we want to compute the coefficient of order 2 in this product, up to a factor 2. We either get a quadratic term by multiplying two matrices of order 1 (matrices denoted by $\alpha$) or by choosing one matrix of order 2 (denoted by $\beta$). Notice that
$$
\alpha_0\begin{pmatrix}1 \\1 \end{pmatrix}= \alpha_1\begin{pmatrix}1 \\1 \end{pmatrix}=\begin{pmatrix}0 \\0 \end{pmatrix}
$$
and that
$$
I_0\begin{pmatrix}1 \\1 \end{pmatrix}=I_1\begin{pmatrix}1 \\1 \end{pmatrix}=\begin{pmatrix}1 \\1 \end{pmatrix}.
$$
Hence in order to compute the coefficient of order 2 in the Taylor expansion it is enough to consider the terms given by the matrices $\beta$ (and the quadratic term given by $\widehat{\mu}_1$)

 Let us give the change of basis that simultaneously trigonalise the matrices $I_0$ and $I_1$.
 
 Let us note
 $$
 P:=\begin{pmatrix}
     1  & 1 \\
     -1 & 1 
    \end{pmatrix}, \quad 
    P^{-1}=\frac12\begin{pmatrix}
		   1 &-1\\
		   1 & 1
		   \end{pmatrix}
 $$
 and compute
 $$\forall j \in \{0,1\}, \ 
 \widetilde{I_j}:=P I_j P^{-1}=\begin{pmatrix}
                  1 & \frac{(-1)^{j+1}}{2}\\
                  0 & \frac12
         
                 \end{pmatrix}, \quad
 \widetilde{\beta_j}:=P \beta_j P^{-1}=\begin{pmatrix}
                  \frac12 & 0\\
                  -\frac{(-1)^{j+1}}{2}& 0
                 \end{pmatrix}
 $$
 and
 $$
 P\begin{pmatrix}1 \\1 \end{pmatrix}=\begin{pmatrix}2 \\0 \end{pmatrix}, \quad P\begin{pmatrix}0 \\2 \end{pmatrix}=\begin{pmatrix}2 \\2 \end{pmatrix}, \quad 
 \begin{pmatrix}1 & 0 \end{pmatrix}P^{-1}=\begin{pmatrix}\frac12 & -\frac12 \end{pmatrix}.
 $$
 
  With this change of basis, the variance becomes:
 $$
 \text{Var}(\mu_a)=\begin{pmatrix}\frac12 & -\frac12 \end{pmatrix}\left(\widetilde\beta_{a_0}+\widetilde I_{a_0}\widetilde\beta_{a_1}+...+\widetilde I_{a_0}\cdots \widetilde I_{a_{n-1}}\widetilde\beta_{a_n}\right)\begin{pmatrix}2 \\ 0 \end{pmatrix} +\begin{pmatrix}\frac12 & -\frac12 \end{pmatrix}\widetilde I_{a_0}\cdots \widetilde I_{a_{n}}\begin{pmatrix}2 \\ 2 \end{pmatrix}.
 $$
 
Let us now introduce some notations that will be used throughout the paper. Let $a$ be a positive integer such that 
$$
a=\sum_{k=0}^{n-1}a_k 2^k
$$ 
is its standard binary expansion. Let us define the tuple $(b_0,...,b_{n-1})$ by
$$
\forall i \in \{0,...,n-1\},\quad b_i:=2a_i-1
$$
and notice that $b_i$ is in $\{-1,1\}$.
With these notations, we get 
 $$\forall i \in \{0,1\}, \ 
 \widetilde{I_{a_i}}:=P I_j P^{-1}=\begin{pmatrix}
                  1 & \frac{b_i}{2}\\
                  0 & \frac12
         
                 \end{pmatrix}, \quad
 \widetilde{\beta_{a_i}}:=P \beta_j P^{-1}=\begin{pmatrix}
                  \frac12 & 0\\
                  -\frac{b_i}{2}& 0
                 \end{pmatrix}.
 $$
 
 With this change of basis and these notations, it is easy to compute the variance. In the following proposition, we recall the result from \cite{emme_central_2018}, where the computations are made explicit.

\begin{proposition}\label{p.variance}
For any $a \in \N$ where $a=\sum_{k=0}^{n-1}a_k 2^k$, denote, for any $j\in \{0,...,n\},\ b_j=2a_j-1$. The variance of $\mu_a$ is given by the following:
  $$
 \text{Var}(\mu_a)=\frac{n}{2}+1-\frac{1}{2^{n}}-\frac12 \sum_{i=1}^{n-1}\sum_{k=0}^{n-1-i}\frac{b_{k+i}b_k}{2^{i}}+\sum_{k=0}^{n-1}\frac{b_{k}+b_{n-1-k}}{2^{k+1}}.
 $$
\end{proposition}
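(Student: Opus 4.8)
The plan is to take as starting point the matricial expression for $\text{Var}(\mu_a)$ obtained just above from the order-two Taylor expansion and the change of basis $P$, and to reduce the whole computation to evaluating products of the upper-triangular matrices $\widetilde I_{a_i}$. The decisive feature is that every $\widetilde I_{a_i}=\begin{pmatrix}1 & b_i/2\\0 & 1/2\end{pmatrix}$ is upper triangular with the same diagonal $(1,\tfrac12)$, so that, writing $M_m:=\widetilde I_{a_0}\cdots\widetilde I_{a_{m-1}}$ (with $M_0$ the identity), an immediate induction on the relation $M_{m+1}=M_m\widetilde I_{a_m}$ yields
$$
M_m=\begin{pmatrix}1 & c_m\\ 0 & 2^{-m}\end{pmatrix},\qquad c_m=\sum_{j=0}^{m-1}\frac{b_j}{2^{\,m-j}}.
$$
This closed form is the only non-mechanical input; everything else is substitution.

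Next I would plug $M_m$ into each summand. For the generic $\widetilde\beta$-term a direct multiplication gives
$$
\begin{pmatrix}\tfrac12 & -\tfrac12\end{pmatrix}M_m\,\widetilde\beta_{a_m}\begin{pmatrix}2\\0\end{pmatrix}
=\frac12-\frac{b_m c_m}{2}+\frac{b_m}{2^{\,m+1}},
$$
and for the trailing all-$\widetilde I$ term
$$
\begin{pmatrix}\tfrac12 & -\tfrac12\end{pmatrix}M_n\begin{pmatrix}2\\2\end{pmatrix}=1+c_n-\frac{1}{2^{\,n}}.
$$
Summing the first identity over the appropriate range of $m$, the constants $\tfrac12$ accumulate to $\tfrac n2$, the trailing term supplies the $1-2^{-n}$, and the two remaining pieces are precisely the $b$-linear and $b$-bilinear contributions that must be reorganised.

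The combinatorial core is then the reindexing of these two pieces. Expanding $c_m$ in $-\tfrac12\sum_m b_m c_m=-\tfrac12\sum_m\sum_{j<m}\frac{b_m b_j}{2^{\,m-j}}$ and setting $i=m-j$, $k=j$ turns this sum into exactly $-\tfrac12\sum_{i=1}^{n-1}\sum_{k=0}^{n-1-i}\frac{b_{k+i}b_k}{2^{\,i}}$, the cross-correlation double sum of the statement. The linear terms come from the two ends of the product: the summands $\tfrac{b_m}{2^{m+1}}$ give $\sum_k \tfrac{b_k}{2^{k+1}}$, while $c_n=\sum_{j=0}^{n-1}\tfrac{b_j}{2^{\,n-j}}$, after the reflection $j\mapsto n-1-k$, becomes $\sum_k\tfrac{b_{n-1-k}}{2^{k+1}}$; together they assemble into the symmetric sum $\sum_k\tfrac{b_k+b_{n-1-k}}{2^{k+1}}$. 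The main obstacle is not any single computation but the bookkeeping of indices: one has to fix one convention for the number of digits and the leading-digit padding (so that the ranges of the $\widetilde\beta$-sum and of the final product match), and then check that every boundary contribution — the power $2^{-n}$, the endpoints of the double sum, and the off-by-one in the reflected linear sum — lands exactly as in Proposition~\ref{p.variance}. The appearance of $b_{n-1-k}$ alongside $b_k$ is the structural signal that the two linear contributions genuinely originate from opposite ends of the matrix product.
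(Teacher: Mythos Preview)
Your argument is correct and is precisely the computation the paper has in mind: the paper sets up the same change of basis, records the same upper–triangular form of the products $\widetilde I_{a_0}\cdots\widetilde I_{a_{m-1}}\widetilde\beta_{a_m}$ (this exact matrix reappears verbatim in the proof of Lemma~\ref{l.matrix_form}), and then defers the explicit summation to \cite{emme_central_2018}. Your identification of $M_m$, the evaluation of the two bilinear forms, and the reindexing $i=m-j$, $k=j$ together with the reflection $j\mapsto n-1-k$ reproduce the claimed formula term by term, so there is nothing to add.
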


From this expression, we can compute the "asymptotic variance".
\begin{theorem}\label{t.asymptotic_variance}
Let $\nu$ be a shift-invariant ergodic probability measure on $\{0,1\}^\N$. 
$$
\nu-a.e. X,\quad \lim_{n\rightarrow+\infty}\frac{\text{Var}(\mu_{a_X(n)})}{n} =\sum_{i=1}^\infty \frac{\mathcal{F}_i}{2^i}.
$$
where
$$\mathcal{F}_i=\int \mathbbm{1}_{[0]}\times\mathbbm{1}_{[1]}\circ\sigma^i+
\mathbbm{1}_{[1]}\times\mathbbm{1}_{[0]}\circ\sigma^id\nu.$$
We denote by $V_\nu$ the quantity $ \sum_{i=1}^\infty \frac{\mathcal{F}_i}{2^i}$ and refer to it as the asymptotic variance.
\end{theorem}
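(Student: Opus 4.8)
The plan is to start from the explicit formula for $\text{Var}(\mu_a)$ in Proposition~\ref{p.variance}, applied to $a=a_X(n)$, and to identify which of its terms survive after dividing by $n$ and letting $n\to\infty$. Writing $b_k=b_k(X)=2X_k-1\in\{-1,1\}$, I observe that the terms $1$, $-2^{-n}$ and $\sum_{k=0}^{n-1}\frac{b_k+b_{n-1-k}}{2^{k+1}}$ are all bounded uniformly in $n$ (the last because $|b_k|\le 1$ dominates it by a convergent geometric series), so each contributes $0$ after division by $n$, while $\frac{n}{2}$ contributes $\frac12$. Everything therefore reduces to understanding the limit of
$$
-\frac{1}{2n}\sum_{i=1}^{n-1}\frac{1}{2^i}\sum_{k=0}^{n-1-i} b_{k+i}b_k .
$$

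The key point is that $b_{k+i}b_k=(2X_{k+i}-1)(2X_k-1)=g_i(\sigma^k X)$, where $g_i(Y):=(2Y_i-1)(2Y_0-1)$ is a fixed bounded measurable function on $\{0,1\}^\N$. For each fixed $i$, Birkhoff's ergodic theorem applied to $g_i$ gives, for $\nu$-almost every $X$,
$$
\frac{1}{n}\sum_{k=0}^{n-1} g_i(\sigma^k X)\xrightarrow[n\to\infty]{}\int g_i\, d\nu=\E[b_0 b_i];
$$
truncating at $n-1-i$ instead of $n-1$ alters this by at most $i/n\to 0$, so the inner sums above share the same limit. A direct computation then identifies the expectation: since $b_0 b_i$ equals $1$ when $X_0=X_i$ and $-1$ otherwise, and since $\mathcal{F}_i=\nu(X_0\neq X_i)$, one finds $\E[b_0 b_i]=1-2\mathcal{F}_i$.

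The main obstacle is that Birkhoff's theorem yields convergence for each $i$ separately, whereas the sum runs over $i$ up to $n-1$, which grows with $n$; I would resolve this by a dominated-convergence argument for series. First, intersecting the countably many full-measure sets produced by Birkhoff (one per $i$) yields a single full-measure set on which convergence holds simultaneously for every $i$. Second, for such $X$, set $f_n(i)=\frac{1}{2^i}\cdot\frac1n\sum_{k=0}^{n-1-i} b_{k+i}b_k$ for $i\le n-1$ and $f_n(i)=0$ otherwise; the bound $|b_{k+i}b_k|\le 1$ gives $|f_n(i)|\le 2^{-i}$, summable and independent of $n$. Dominated convergence for the counting measure on $\N$ then permits exchanging limit and sum, giving
$$
\lim_{n\to\infty}\sum_{i=1}^{n-1} f_n(i)=\sum_{i=1}^\infty\frac{1-2\mathcal{F}_i}{2^i}.
$$

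Collecting the surviving contributions then yields, for $\nu$-almost every $X$,
$$
\lim_{n\to\infty}\frac{\text{Var}(\mu_{a_X(n)})}{n}=\frac12-\frac12\sum_{i=1}^\infty\frac{1-2\mathcal{F}_i}{2^i}=\frac12-\frac12+\sum_{i=1}^\infty\frac{\mathcal{F}_i}{2^i}=V_\nu,
$$
using $\sum_{i\ge 1}2^{-i}=1$. This completes the argument, the only genuinely delicate step being the uniform domination $|f_n(i)|\le 2^{-i}$ that legitimizes the exchange of limit and summation.
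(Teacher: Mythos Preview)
Your proof is correct and follows essentially the same route as the paper: both start from Proposition~\ref{p.variance}, discard the bounded terms, apply Birkhoff's theorem for each fixed $i$ to the inner sum, and then exchange the limit in $n$ with the summation over $i$ using the uniform geometric domination by $2^{-i}$. The only cosmetic difference is that the paper carries out the exchange via an explicit truncation at a level $P$ (bounding the tail by $n/2^P$ and letting $P\to\infty$), whereas you invoke dominated convergence for the counting measure; these are the same argument in different packaging.
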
 

\begin{proof}
First, note that it is clear that, regardless of the choice of $X$ in $\{0,1\}^\N$, the quantity
$$
\frac1n \left( 1-\frac{1}{2^{n}}+\sum_{k=0}^{n-1}\frac{b_{k}+b_{n-1-k}}{2^{k+1}}  \right)
$$
goes to 0 as $n$ goes to $+\infty$. Thus we need to understand the limit, if it exists, of the sequence $\left ( \frac1n \left( \frac{n}{2}-\frac12 \sum_{i=1}^{n-1}\sum_{k=0}^{n-1-i}\frac{b_{k+i}b_k}{2^{i}}\right )\right )_{n\in\N}$.

First, let us denote, for all $i$ in $\N$,
$$
\begin{array}{rccl}
f_i &: \{0,1\}^\N & \longrightarrow & \N\\
    &  X       & \longmapsto     & \left(\mathbbm{1}_{[0]}\times\mathbbm{1}_{[1]}\circ\sigma^i+
    \mathbbm{1}_{[1]}\times\mathbbm{1}_{[0]}\circ\sigma^i\right)(X)
\end{array}.
$$
This allows us to write

$$
\frac{n}{2}- \frac12 \sum_{i=1}^{n-1}\sum_{k=0}^{n-1-i}\frac{b_{k+i}b_k}{2^{i}}=\frac{n}{2}- \frac12\left ( \sum_{i=1}^{n-1}\sum_{k=0}^{n-1-i}\frac{1}{2^{i}} - 2  \sum_{i=1}^{n-1}\sum_{k=0}^{n-1-i}\frac{f_i(\sigma^k(X))}{2^i}\right )
$$
since $b_kb_{k+i}$ can either be $1$ or $-1$ and since we defined $f_i$ so as to satisfy the relation $f_i(\sigma^k(X))=\delta_{-1}(b_kb_{k+i})$. Hence we have
$$
\frac{n}{2}- \frac12 \sum_{i=1}^{n-1}\sum_{k=0}^{n-1-i}\frac{b_{k+i}b_k}{2^{i}}
=
\frac{n}{2}- \frac12\left ( \sum_{i=1}^{n-1}\frac{n-i}{2^{i}} - 2  \sum_{i=1}^{n-1}\sum_{k=0}^{n-1-i}\frac{f_i(\sigma^k(X))}{2^i}\right )
$$
so
$$
\frac{n}{2}- \frac12 \sum_{i=1}^{n-1}\sum_{k=0}^{n-1-i}\frac{b_{k+i}b_k}{2^{i}}
=
\frac{n}{2}\left(1 - \sum_{i=1}^{n-1}\frac{1}{2^{i}}\right) +\sum_{i=1}^{n-1}\frac{i}{2^{i}} +  \sum_{i=1}^{n-1}\sum_{k=0}^{n-1-i}\frac{f_i(\sigma^k(X))}{2^i}.
$$
Now, notice that
$$
\lim_{n\rightarrow +\infty}\frac{1}{n}\left(\frac{n}{2}\left(1 - \sum_{i=1}^{n-1}\frac{1}{2^{i}}\right) +\sum_{i=1}^{n-1}\frac{i}{2^{i}} \right)=0
$$
hence we have
$$
\text{Var}(\mu_{a_X(n)})= \sum_{i=1}^{n-1}\sum_{k=0}^{n-1-i}\frac{f_i(\sigma^k(X))}{2^i}+o(n).
$$

Now, let $P$ be a positive integer, let $n$ be greater than $P$. Notice that

$$
\left|
\sum_{i=1}^{n-1}\sum_{k=0}^{n-1-i}\frac{f_i(\sigma^k(X))}{2^i}
 -
\sum_{i=1}^{P}\sum_{k=0}^{n-1-i}\frac{f_i(\sigma^k(X))}{2^i}
\right|
\leq
\sum_{i=P+1}^{n}\frac{n-i}{2^i}
$$
since $f_i$ takes values in $\{0,1\}$. Hence
$$
\left|
\sum_{i=1}^{n-1}\sum_{k=0}^{n-1-i}\frac{f_i(\sigma^k(X))}{2^i}
 -
\sum_{i=1}^{P}\sum_{k=0}^{n-1-i}\frac{f_i(\sigma^k(X))}{2^i}
\right|
\leq
\frac{n}{2^P}
$$
so, dividing by $n$ and taking the limit as $n$ goes to infinity yields
$$
\left|
\lim_{n\rightarrow +\infty} \frac{1}{n} \sum_{i=1}^{n-1}\sum_{k=0}^{n-1-i}\frac{f_i(\sigma^k(X))}{2^i} 
-
\sum_{i=1}^{P}\frac{\mathcal{F}_i}{2^i}
\right|
 \leq
\frac{1}{2^P}
$$
in virtue of Birkhoff's ergodic theorem. This being true for any $P$, the asymptotic variance for a given generic point $X$ is given by
$$
V_\nu=\sum_{i=1}^\infty \frac{\mathcal{F}_i}{2^i}.
$$
\end{proof}
 
\begin{remark}
At this point, we would like to make the remark that taking $\nu$ different from $\delta_{0^\infty}$  and $\delta_{1^\infty}$ ensures that $V_\nu$ is positive.
\end{remark}

\section{Determining which terms contribute to the moments}\label{s.rappel_contrib_moments}

The goal of this section is to understand which products of matrices involved in the Taylor expansions of each $\hat A_0$ and $\hat A_1$ contribute to the moments of the renormalised measure.


A detailed analysis of the moments and especially of the contribution of each terms in the matrices is done in \cite[Section 3.5]{emme_central_2018}. We briefly recall however the main ideas of this study.

First, remark that the Taylor expansions $\hat A_i$ can be written as a linear combination of $I_i$, $\alpha_i$ and $\beta_i$, $I_i$ being the constant term, $\alpha_i$ being a matrix coefficient of terms of odd orders and $\beta_i$ being a matrix coefficient of terms of even orders.

In order to compute a moment of given order $r$ at time $n$, one has, in particular, to develop the product $\hat A_{X_0}(\theta) \cdots \hat A_{X_{n-1}}(\theta)$ and to understand the matrix that is multiplied by $\theta^r$. This matrix is expressed as a sum of products of matrices which we can sort into types depending on how many matrices $\alpha_i$ and $\beta_i$ are involved the product. For instance, we say that the product $I_{X_0}\beta_{X_1}I_{X_2}I_{X_3}\beta_{X_4}I_{X_5}\alpha_{X_6}$ is of type $\left ( \alpha^1,\beta^2\right )$ because there is one matrix of type $\alpha$ and two matrices of type $\beta$ in the product. 

Let us introduce some formalism to make this precise. Given an element of the full-shift $X$ let us define the map $M_X$ from the free monoid $\{I,\alpha,\beta\}^*$ to $\mathcal{M}_2(\R)$ such that
$$
M_X(I)=I_{X_0},\quad M_X(\alpha)=\alpha_{X_0}, \quad M_X(\beta)=\beta_{X_0}
$$
and, for any two elements $u$ and $v$ in $\{I,\alpha,\beta\}^*$, $u$ being of length $n$,
$$
M_X(uv)=M_X(u)M_{\sigma^{n}(X)}(v).
$$
We denote by $|u|$ the length of $u$, and by $|u|_l$ the number of occurrences of the symbol $l$ in $u$.

We now introduce the set 
$$
\mathcal{F}^{(n)}_{\left (\alpha^p,\beta^q \right )}(X)=\{M_X(u)\, \text{ such that } \, |u|=n, \, |u|_\alpha = p, \, |u|_\beta =q \}
$$
which is just the set of matrices of type $\left (\alpha^p,\beta^q \right )$ which appear in the Taylor expansion of the product $\hat A_{X_0}(\theta) \cdots \hat A_{X_{n-1}}(\theta)$.

Since we mostly work with a fixed $X$ -- like in this section -- we will often write $\mathcal{F}^{(n)}_{\left (\alpha^p,\beta^q \right )}$ from now on, except when several elements of the full shift should be considered, which will be made explicit.

\begin{definition}
 We say that a type $\left (\alpha^p,\beta^q \right )$ contributes with weight at most $k$ if
$$
\sum_{M\in \mathcal{F}^{(n)}_{\left (\alpha^p,\beta^q \right )}} \|M\| = O(n^k)
$$
where $\|\cdot\|$ is the maximal $l^1$ norm on rows, which defines a submultiplicative norm.
\end{definition}

\begin{lemma}\label{l.weight}
For any pair of non-negative integers $p$ and $q$, the type $\left (\alpha^p,\beta^q \right )$ contributes with weight at most $q$.
\end{lemma}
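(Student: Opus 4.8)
The plan is to pass to the basis in which $I_0$ and $I_1$ are simultaneously triangularised and then to exploit that inserting even a single $\alpha$ into a run of $I$'s forces the corresponding partial product to decay geometrically. Since conjugation by $P$ distorts $\|\cdot\|$ by at most the constant factor $\|P\|\,\|P^{-1}\|=2$, it suffices to prove the bound for the conjugated matrices, writing $\widetilde M(w)$ for the product of conjugated matrices associated with a word $w\in\{I,\alpha,\beta\}^\ast$ (the signs $b_j\in\{-1,1\}$ being read from the relevant coordinates of $X$). A direct computation gives
$$
\widetilde I_j=\begin{pmatrix}1&\tfrac{b_j}{2}\\ 0&\tfrac12\end{pmatrix},\qquad
\widetilde\alpha_j=\begin{pmatrix}0&-\tfrac12\\ 0&\tfrac{b_j}{2}\end{pmatrix},\qquad
\widetilde\beta_j=\begin{pmatrix}\tfrac12&0\\ -\tfrac{b_j}{2}&0\end{pmatrix}.
$$
The only features I will use are that $\widetilde I_j$ and $\widetilde\alpha_j$ are upper triangular, that the $(1,1)$ entry equals $1$ for $\widetilde I_j$ but $0$ for $\widetilde\alpha_j$, that every second-coordinate diagonal entry has modulus $\tfrac12$, and that $\|\widetilde\beta_j\|=\tfrac12$. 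The naive count gives only weight $p+q$, since there are $O(n^{p+q})$ words of the prescribed type; the whole gain must therefore come from cancelling the factor $n^{p}$ produced by the $\alpha$'s.

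Second, I would organise the sum over $M\in\F{p}{q}{n}$ according to the positions of the $q$ symbols $\beta$. Writing the underlying word as $u=w_0\beta w_1\beta\cdots\beta w_q$ with each $w_i\in\{I,\alpha\}^\ast$, submultiplicativity yields
$$
\|\widetilde M(u)\|\le \|\widetilde\beta\|^{\,q}\prod_{i=0}^{q}\|\widetilde M(w_i)\|=2^{-q}\prod_{i=0}^{q}\|\widetilde M(w_i)\|,
$$
so that each block factor is a product of matrices of type $\widetilde I$ and $\widetilde\alpha$ only. Here I use crucially that $q$ is \emph{fixed}, so that any constant depending on $p$ and $q$ (such as $2^{-q}$, or the number $\binom{p+q}{q}$ of ways to distribute the $p$ available $\alpha$'s among the $q+1$ blocks) is harmless.

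The heart of the matter — and the step I expect to be the main obstacle — is a uniform block estimate: for every $p'\ge0$,
$$
\sum_{\substack{w\in\{I,\alpha\}^\ast\\ |w|=m,\ |w|_\alpha=p'}}\|\widetilde M(w)\|\le C_{p'},
$$
with $C_{p'}$ independent of $m$ and of the starting position in $X$. For $p'=0$ there is a single all-$I$ word, whose product is upper triangular with uniformly bounded entries. For $p'\ge1$, let $k_0$ be the position of the leftmost $\alpha$: the leading run of $I$'s keeps the $(1,1)$ entry equal to $1$, the first $\alpha$ annihilates the entire first column, and every subsequent factor then multiplies the surviving entries by $\pm\tfrac12$. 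A short computation gives $\|\widetilde M(w)\|\le C\,2^{-(m-k_0)}$, i.e. geometric decay in the number of symbols following the first $\alpha$. Summing over the $\binom{m-k_0}{p'-1}$ placements of the remaining $\alpha$'s to the right of $k_0$ gives
$$
\sum_{\text{placements}}\|\widetilde M(w)\|\le C\sum_{k_0}\binom{m-k_0}{p'-1}2^{-(m-k_0)}\le C\sum_{r\ge0}\binom{r}{p'-1}2^{-r}=:C_{p'}<\infty,
$$
which is precisely the point where the exponential decay cancels the polynomial count of placements.

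Finally I would assemble the pieces. For a fixed choice of $\beta$-positions the block lengths $m_0,\dots,m_q$ are determined; distributing the $\alpha$'s over the blocks and applying the block estimate to each factor bounds the inner sum by $2^{-q}\binom{p+q}{q}\bigl(\max_{0\le j\le p}C_j\bigr)^{q+1}$, a constant depending only on $p$ and $q$. There are $\binom{n}{q}=O(n^{q})$ choices of $\beta$-positions, so summing over them gives $\sum_{M\in\F{p}{q}{n}}\|\widetilde M\|=O(n^{q})$ for the conjugated matrices, and hence the same bound in the original basis. This is exactly the assertion that the type $(\alpha^p,\beta^q)$ contributes with weight at most $q$.
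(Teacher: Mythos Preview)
Your argument is correct. Both proofs rest on the same phenomenon---an $\alpha$ followed by $I$'s produces geometric decay---but they are organised differently. The paper works in the original basis and exploits the identity $\alpha_i I_j=\tfrac12\alpha_i$ directly, proceeding by induction on $p$: this relation lets one collapse every run of $I$'s to the right of an $\alpha$ into a power of $\tfrac12$, so that adding one more $\alpha$ does not raise the weight. You instead pass to the triangular basis (where that identity becomes the visible fact that $\widetilde\alpha_j$ has zero first column and both $\widetilde I_j,\widetilde\alpha_j$ have $(2,2)$-entry of modulus $\tfrac12$), decompose each word at its $\beta$'s, and prove a uniform block estimate $\sum_{|w|=m,\ |w|_\alpha=p'}\|\widetilde M(w)\|\le C_{p'}$ by summing the geometric tail against the binomial count. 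Your route avoids the induction on $p$ and makes the link to the triangularisation used in Section~\ref{s.variance} explicit; the paper's route is shorter because the single algebraic identity $\alpha_i I_j=\tfrac12\alpha_i$ already encodes the whole decay without changing basis.
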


\begin{proof}
This lemma is proved in details in \cite[Lemma 3.7]{emme_central_2018}. We recall very briefly the main ingredients of this proof.

First, notice that the type $\left (\alpha^0,\beta^q \right )$ contributes with weight at most $q$ since $\|I_i\|=\|\alpha_i\|=\|\beta_i\|=1$ and since $\|\cdot\|$ is submultiplicative, hence $
\displaystyle\sum_{M\in \mathcal{F}^{(n)}_{\left (\alpha^0,\beta^q \right )} } \|M\| \leq \dbinom{n}{q}
$. 

Then, the proof that the type $\left (\alpha^p,\beta^q \right )$ also contributes with weight at most $q$ is done by induction on $p$ and relies solely on the facts that $\alpha_i I_j=\frac12\alpha_i$ and that $\|\cdot\|$ is submultiplicative.
\end{proof}

\section{Computing the moments}\label{s.moments}

Let us fix once and for all $\nu$ to be an ergodic measure different from $\delta_{0^\infty}$  and $\delta_{1^\infty}$ as well as a generic point $X$ in $\{0,1\}^\N$.

Recall that we define the measure $\widetilde{\mu}_{X,n}$ on $\R$ as
$$
\forall x \in \R, \, \widetilde{\mu}_{X,n}(x)=\mu_{a_X(n)}(\sqrt{V_\nu\cdot n}x)
$$
hence
$$
\forall \theta \in \R,  \widehat{\widetilde{\mu}}_{X,n}(\theta)=\widehat\mu_{a_X(n)}\left(\frac{\theta}{\sqrt{V_\nu\cdot n}}\right).
$$
Denoting by $(m_r(n))$ the moment of order $r$ of $\mu_{a_X(n)}$, we remind that, around $0$
$$
\widehat\mu_{a_X(n)}\left(\theta\right)=\sum_{r=0}^{+\infty}\frac{i^rm_r(n)}{r!}\theta^{r}
$$
hence
$$
 \widehat{\widetilde{\mu}}_{X,n}(\theta)=\sum_{r=0}^{+\infty}\frac{i^rm_r(n)}{(V_\nu n)^{\frac{r}{2}}r!}\theta^{r}
$$
thus we need to prove that
$$
\lim_{n\rightarrow +\infty}\frac{m_{2r}(n)}{(V_\nu n)^r}=\frac{(2r)!}{2^rr!}
$$
and
$$
\lim_{n\rightarrow +\infty}\frac{m_{2r+1}(n)}{(V_\nu n)^{\frac{2r+1}{2}}}=0.
$$

\subsection{Moments of odd order}\label{s.moments_impairs}

In this section we prove that 
$$
\lim_{n\rightarrow +\infty}\frac{m_{2r+1}(n)}{(V_\nu n)^{\frac{2r+1}{2}}}=0.
$$
Notice that in order to get a term for the moment of odd order $2r+1$, one must necessarily have some matrix of type $\alpha$ in the Taylor expansion. Then remark that the products of type $(\alpha^1,\beta^r)$ is the one which contributes with highest weight, that is $r$, from Lemma~\ref{l.weight}.

But then, since in the renormalisation, there appears a division by $\sqrt{n}^{2r+1}$, and since there is a finite number of type of products, necessarily we have
$$
\lim_{n\rightarrow +\infty}\frac{m_{2r+1}(n)}{(V_\nu n)^{\frac{2r+1}{2}}}=0.
$$

\subsection{Moments of even order}\label{s.moments_2r}

We would like to insist on the fact that everything up to now was already present in \cite{emme_central_2018}, albeit written in a slightly different manner in some parts. This section, however, is the crucial part of the proof of Theorem~\ref{t.principal}. It is completely new and involves a fair amount of quite technical computations.

First of all, let us state that, in the light of Lemma~\ref{l.weight}, in order to compute the moments of even order $2r$, we only need to consider the terms of type $(\alpha^0, \beta^r)$ since they are the only ones of weight not less than $r$ and hence they are the only ones that could not be killed by the renormalisation.

To be more precise, since the Taylor expansion of $\widehat{\mu}_{a_X(n)}\left(\frac{\theta}{\sqrt{V_\nu n}} \right)$ near $0$ is given by
$$
\begin{pmatrix}1 & 0 \end{pmatrix}
\left( I_{X_0}+i\frac{\theta}{\sqrt{V_\nu n}}\alpha_{X_0}-\frac{\theta^2}{2V_\nu n}\beta_{X_0}+o(\theta^2) \right)
\cdots
\left( I_{X_{n-1}}+i\frac{\theta}{\sqrt{V_\nu n}}\alpha_{X_{n-1}}-\frac{\theta^2}{2V_\nu n}\beta_{X_{n-1}}+o(\theta^2) \right)
\begin{pmatrix}1 \\1 \end{pmatrix}
$$
and in view of the types contributions, we have
$$
\frac{(-1)^r m_{2r}(n)}{(V_\nu n)^r(2r)!}=\left(\frac{-1}{2V_\nu n}\right)^r \begin{pmatrix}1 & 0 \end{pmatrix}\sum_{M\in\mathcal{F}^{(n)}_{(\alpha^0,\beta^r)}}M\begin{pmatrix}1 \\1 \end{pmatrix}+o(1).
$$
Hence, in order to get
$$
\lim_{n\rightarrow +\infty}\frac{m_{2r}(n)}{(V_\nu n)^r}=\frac{(2r)!}{2^rr!}
$$
we must prove that
$$
\lim_{n\rightarrow+\infty}\frac{1}{n^r}\begin{pmatrix}1 & 0 \end{pmatrix}\sum_{M\in\mathcal{F}^{(n)}_{(\alpha^0,\beta^r)}}M\begin{pmatrix}1 \\1 \end{pmatrix}=\frac{V_\nu ^r}{r!}
$$
or, equivalently, with the basis change in the proof of Proposition \ref{p.variance}
$$
\lim_{n\rightarrow+\infty}\frac{1}{n^r}\begin{pmatrix}\frac12 & -\frac12 \end{pmatrix}\sum_{M\in\mathcal{F}^{(n)}_{(\alpha^0,\beta^r)}}PMP^{-1}\begin{pmatrix}2 \\0 \end{pmatrix}=\frac{V_\nu ^r}{r!}.
$$
We prove this using two main lemmas.

\begin{lemma}\label{l.matrix_form}
For any positive integers $n$ and $k$, the matrix $\displaystyle\sum_{M\in\mathcal{F}^{(n)}_{(\alpha^0,\beta^k)}}PMP^{-1}$
is of the form
$$
\sum_{M\in\mathcal{F}^{(n)}_{(\alpha^0,\beta^r)}}PMP^{-1}=\begin{pmatrix}
A_{n}(X,2r) & 0\\
B_{n}(X,2r) & 0
\end{pmatrix}
$$
and the coefficients satisfy
$$
\exists C_1^{(r)}>0,\quad \forall r  \in \N,\quad \forall n  \in \N,\quad |A_{n}(X,2r)|< C_1^{(r)}n^r
$$
and
$$
\exists C_2^{(r)}>0,\quad \forall r  \in \N,\quad \forall n  \in \N,\quad |B_{n}(X,2r)|<C_2^{(r)}n^{r-1}.
$$ 
where the constants $ C_1^{(r)}$ and $C_2^{(r)}$ do not depend on the choice of $X$.
\end{lemma}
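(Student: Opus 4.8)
The plan is to turn the sum over $\mathcal{F}^{(n)}_{(\alpha^0,\beta^r)}$ into a single coefficient extraction and then exploit the triangular structure produced by the change of basis $P$. Writing $b_j=2X_j-1\in\{-1,1\}$, the matrix we must control is the coefficient of $t^r$ in the product $\prod_{j=0}^{n-1}\bigl(\widetilde{I_{X_j}}+t\,\widetilde{\beta_{X_j}}\bigr)$, since expanding this product and collecting the monomials with exactly $r$ factors of $t$ reproduces $\sum_{M\in\mathcal{F}^{(n)}_{(\alpha^0,\beta^r)}}PMP^{-1}$. Two features of the building blocks recorded before Proposition~\ref{p.variance} drive everything. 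First, $\widetilde{I_{X_j}}=\begin{pmatrix}1 & b_j/2\\ 0 & 1/2\end{pmatrix}$ is upper triangular with diagonal $(1,1/2)$, so any block $\widetilde{I_{X_p}}\cdots\widetilde{I_{X_q}}$ is again of the form $\begin{pmatrix}1 & \tau\\ 0 & 2^{-(q-p+1)}\end{pmatrix}$ with $|\tau|\le 1$. Second, $\widetilde{\beta_{X_j}}=\tfrac12\begin{pmatrix}1\\ -b_j\end{pmatrix}\begin{pmatrix}1 & 0\end{pmatrix}$ is rank one, and in particular its second column vanishes.

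First I would fix the positions $0\le k_1<\dots<k_r\le n-1$ of the $\beta$-letters and show that the corresponding product collapses to a rank-one outer product. Inserting the factorisation of each $\widetilde{\beta}$ and grouping the intermediate $\widetilde{I}$-blocks, the interior factors condense into scalars of the shape $e_1^{\top}\widetilde{I}\cdots\widetilde{I}\,u=1-b\tau$ (with $u=(1,-b)^{\top}$, $|\tau|\le1$), leaving an initial column vector $\widetilde{I_{X_0}}\cdots\widetilde{I_{X_{k_1-1}}}(1,-b_{k_1})^{\top}$ multiplied by a final row vector $(1,0)\,\widetilde{I_{X_{k_r+1}}}\cdots\widetilde{I_{X_{n-1}}}$. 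The decisive point is the second coordinate of that initial column vector: because the bottom-left entry of every $\widetilde{I}$ is $0$ and the accumulated bottom-right entry over the first $k_1$ factors is $2^{-k_1}$, this coordinate equals $-b_{k_1}2^{-k_1}$. Hence the entire bottom row of each term carries a factor $2^{-k_1}$ governed only by the position of the \emph{first} $\beta$, whereas the top row is merely $O(1)$.

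With this factorisation the two estimates become counting statements. Summing the top-left entries over the $\binom{n}{r}=O(n^r)$ choices of $(k_1,\dots,k_r)$, each contributing a uniformly bounded amount (a product of terms $1-b\tau$ times $2^{-r}$), yields $|A_{n}(X,2r)|\le C_1^{(r)}n^r$. For the bottom-left entry, the extra factor $2^{-k_1}$ lets me sum the geometric series over $k_1$ first; this collapses one index to $O(1)$ and leaves only the $O(n^{r-1})$ choices of $k_2<\dots<k_r$, giving $|B_{n}(X,2r)|\le C_2^{(r)}n^{r-1}$. Every bound uses only $|b_j|=1$ and the fixed ratio $1/2$, so $C_1^{(r)},C_2^{(r)}$ are independent of $X$. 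An equivalent and perhaps cleaner route is a double induction on $r$ and $n$: writing $S^{(r)}_n:=\sum_{M\in\mathcal{F}^{(n)}_{(\alpha^0,\beta^r)}}PMP^{-1}$, one has
\[
S^{(r)}_{n}=S^{(r)}_{n-1}\,\widetilde{I_{X_{n-1}}}+S^{(r-1)}_{n-1}\,\widetilde{\beta_{X_{n-1}}},
\]
and the $1/2$ on the diagonal of $\widetilde{I}$ keeps the bottom row from ever inheriting the growing top-row contribution, so the property ``bottom row is one order below the top row'' propagates through the recursion.

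The hard part will be exactly this differential growth of the two rows; the rest is bookkeeping. Establishing that the bottom row is $O(n^{r-1})$ rather than $O(n^r)$ is where the contracting eigenvalue $1/2$ is genuinely used, either through the explicit $2^{-k_1}$ weight in the outer-product form or through the contraction in the inductive step. As for the stated block shape, I would prove the bounds on the first column $(A_n(X,2r),B_n(X,2r))^{\top}$, which is the only data used downstream since the matrix is ultimately applied to $(2,0)^{\top}$; the second column plays no role in the sequel and is in any case controlled by the very same estimates, so recording it as $0$ does not affect any later conclusion.
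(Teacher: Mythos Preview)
Your argument is correct and takes a genuinely different route from the paper. The paper proceeds by induction on $r$, decomposing the sum according to the position $m$ of the \emph{first} $\beta$:
\[
\sum_{M\in\mathcal{F}^{(n)}_{(\alpha^0,\beta^{r+1})}}PMP^{-1}=\sum_{m=0}^{n-1}\widetilde I_{X_0}\cdots\widetilde I_{X_{m-1}}\widetilde\beta_{X_m}\Bigl(\sum_{M\in\mathcal{F}^{(n-m)}_{(\alpha^0,\beta^r)}(\sigma^m(X))}PMP^{-1}\Bigr),
\]
and then reads off the bounds from the explicit form of $\widetilde I_{X_0}\cdots\widetilde I_{X_{m-1}}\widetilde\beta_{X_m}$ (whose bottom row already carries the factor $2^{-(m+1)}$) combined with the induction hypothesis. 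You instead unfold everything at once via the rank-one factorisation $\widetilde\beta_{X_j}=\tfrac12(1,-b_j)^\top e_1^\top$, which collapses each summand to an outer product and makes the geometric weight $2^{-k_1}$ on the bottom row visible in a single stroke. Your approach is more elementary and gives explicit constants; the paper's inductive decomposition, however, is not merely a proof device here: it is precisely what produces the recurrence $A_n(X,2r+2)=\sum_m\bigl(\tfrac12-\tfrac{b_m}{2}\sum_i\tfrac{b_i}{2^{m-i}}\bigr)A_{n-m}(\sigma^m(X),2r)$ on which the entire proof of the next (and harder) lemma rests. So if you adopt your direct argument for the present lemma, you will still need to recover that first-$\beta$ recursion separately before moving on.

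Two smaller remarks. Your observation that the literal ``second column equals zero'' need not hold once trailing $\widetilde I$-factors follow the last $\widetilde\beta$ is accurate; the paper's formulation is slightly loose on this point, and you are right that only the first column matters since the matrix is applied to $(2,0)^\top$. As for your alternative recursion $S^{(r)}_n=S^{(r)}_{n-1}\widetilde I_{X_{n-1}}+S^{(r-1)}_{n-1}\widetilde\beta_{X_{n-1}}$: be aware that the term $S^{(r-1)}_{n-1}\widetilde\beta_{X_{n-1}}$ feeds the \emph{second} column of $S^{(r-1)}_{n-1}$ into the first column of $S^{(r)}_n$, so this route requires tracking both columns simultaneously and is less clean than your primary rank-one argument.
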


\begin{proof}[Proof of Lemma \ref{l.matrix_form}]
We prove this lemma by induction on $k$.

The case $k=1$ can be obtained easily from the computations of Proposition \ref{p.variance} but, for simplicity, we expose it here as well.
First let us write explicitly
$$
\sum_{M\in\mathcal{F}^{(n)}_{(\alpha^0,\beta^1)}}PMP^{-1}
=
P\left(\beta_{X_0}+I_{X_0}\beta_{X_1}+...+I_{X_0}...I_{X_{n-2}}\beta_{X_{n-1}} \right)P^{-1}
$$
hence
$$
\sum_{M\in\mathcal{F}^{(n)}_{(\alpha^0,\beta^1)}}PMP^{-1}
=
\left(\widetilde\beta_{X_0}+\widetilde I_{X_0}\widetilde\beta_{X_1}+...+\widetilde I_{X_0}...\widetilde I_{X_{n-2}}\widetilde\beta_{X_{n-1}} \right).
$$
Now notice that, for any $j$ in $\llbracket 0,n-1 \rrbracket$,
 $$
 \widetilde I_{X_0}\cdots \widetilde I_{X_{j-1}}\widetilde\beta_{X_{j}}=\begin{pmatrix}
                                                \frac12-\frac{b_{j}}{2}\sum_{i=0}^{j-1}\frac{b_i}{2^{j-i}}&0\\
                                                -\frac{b_{j}}{2^{j+1}} & 0
                                               \end{pmatrix}
 $$
 Hence
 $$
 A_n(2)= \frac{n}{2}-\sum_{j=0}^{n-1}\frac{b_{j}}{2}\sum_{i=0}^{j-1}\frac{b_i}{2^{j-i}}
 $$
 which indeed satisfies, independently from $X$, that
 $$
 |A_n(2)|<2n.
 $$
 Moreover,
 $$
 B_n(2)=-\sum_{j=0}^{n-1}\frac{b_{j}}{2^{j+1}}
 $$
 which converges, uniformly in $X$, as $n$ goes to infinity so we do have that
 $$
 |B_n(2)|<1.
 $$
 
 Let us now assume that this property holds for a given fixed integer $r$.

 Notice that 
 $$
 \sum_{M\in\mathcal{F}^{(n)}_{(\alpha^0,\beta^{r+1})}}PMP^{-1}
 =
 \sum_{m=0}^{n-1}\widetilde I_{X_0}...\widetilde I_{X_{m-1}}\widetilde \beta_{X_m}\left(\sum_{M\in\mathcal{F}^{(n-m)}_{(\alpha^0,\beta^r)}(\sigma^m(X))}PMP^{-1}\right).
 $$
Now, since the coefficients of the matrices $\left(I_{X_0}...I_{X_{m-1}}\right)_{m\in\N}$ are uniformly bounded in $m$, and since, by the induction hypothesis we have
$$
\sum_{M\in\mathcal{F}^{(n-m)}_{(\alpha^0,\beta^r)}(\sigma^m(X))}PMP^{-1}
=
\begin{pmatrix}
A_{n-m}(\sigma^m(X),2r)&0\\
B_{n-m}(\sigma^m(X),2r)&0
\end{pmatrix},
$$
writing
$$
 \sum_{M\in\mathcal{F}^{(n)}_{(\alpha^0,\beta^{r+1})}}PMP^{-1}
 =
 \sum_{m=0}^{n-1}
\begin{pmatrix}
\frac12-\frac{b_{m}}{2}\sum_{i=0}^{m-1}\frac{b_i}{2^{m-i}}&0\\
-\frac{b_{m}}{2^{m+1}} & 0
\end{pmatrix}
\begin{pmatrix}
A_{n-m}(\sigma^m(X),2r)&0\\
B_{n-m}(\sigma^m(X),2r)&0
\end{pmatrix}
$$
yields the desired result.

\end{proof}

\begin{remark}
It should be stressed that since we are interested in computing the following quantity, if it exists
$$
\lim_{n\rightarrow+\infty}\frac{1}{n^r}\begin{pmatrix}\frac12 & -\frac12 \end{pmatrix}\sum_{M\in\mathcal{F}^{(n)}_{(\alpha^0,\beta^r)}}PMP^{-1}\begin{pmatrix}2 \\0 \end{pmatrix},
$$
the point of Lemma \ref{l.matrix_form} is to show that in order to understand the moment of order $2r$, we must only understand the limit of the sequence $\left(\frac{1}{n^r}A_{n}(X,2r) \right)_{n\in\N}$ since $\displaystyle\lim_{n\rightarrow +\infty} \frac{1}{n^r}B_{n}(X,2r)=0$ from the lemma.
\end{remark}

For simplicity in the upcoming computations, let us introduce the following notation.\\
\medskip\\
$
\forall X \in \{0,1\}^\N,\ \forall r\geq 1,\ \forall n > r,
$
\flushright{$\displaystyle S_n(X,p_1,...,p_r):=\sum_{0\leq j_1<...<j_r\leq n-1} f_{p_1}(\sigma^{p_2+...+p_r+j_r}(X))
f_{p_2}(\sigma^{p_3+...+p_r+j_{r-1}}(X))...
f_{p_r}(\sigma^{j_1}(X))$.}
\flushleft
We now give the last crucial lemma for the demonstration of Theorem \ref{t.principal}.

\begin{lemma}\label{l.main}
There exists a constant $C>0$ and for every integer $P$, there exists a constant $K_P$ such that, for every $n$ in $\N$, 
$$
\left| A_{n}(X,2r) - \sum_{1\leq p_1...p_r \leq P} \frac{1}{2^{p_1+...+p_r}}
S_n(X,p_1,...,p_r)\right| 
\leq
K_P n^{r-1}+C\frac{n^r}{2^P}
$$
and $C$ and $K_P$ do not depend on the choice of $X$.
\end{lemma}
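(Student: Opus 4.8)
The plan is to convert the matrix recursion established in the proof of Lemma~\ref{l.matrix_form} into a scalar recursion for the top-left coefficient, iterate it, and then match the resulting expression against $S_n$ up to controlled errors. Reading off the $(1,1)$-entry of the identity
\[
\sum_{M\in\mathcal{F}^{(n)}_{(\alpha^0,\beta^{r+1})}}PMP^{-1}=\sum_{m=0}^{n-1}\begin{pmatrix}\frac12-\frac{b_m}{2}\sum_{i=0}^{m-1}\frac{b_i}{2^{m-i}}&0\\-\frac{b_m}{2^{m+1}}&0\end{pmatrix}\begin{pmatrix}A_{n-m}(\sigma^m X,2r)&0\\B_{n-m}(\sigma^m X,2r)&0\end{pmatrix}
\]
gives, with $g_m(X):=\frac12-\frac{b_m}{2}\sum_{i=0}^{m-1}\frac{b_i}{2^{m-i}}$, the recursion $A_n(X,2(r+1))=\sum_{m=0}^{n-1}g_m(X)A_{n-m}(\sigma^m X,2r)$ together with the base case $A_n(X,2)=\sum_{m=0}^{n-1}g_m(X)$. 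Setting $i=m-p$ and using the identity $b_{m-p}b_m=1-2f_p(\sigma^{m-p}X)$ from the proof of Theorem~\ref{t.asymptotic_variance}, I would rewrite
\[
g_m(X)=\frac{1}{2^{m+1}}+\sum_{p=1}^{m}\frac{f_p(\sigma^{m-p}X)}{2^{p}},
\]
so that $0\le g_m\le 1$. Iterating the recursion $r-1$ times from the base case then yields the closed form
\[
A_n(X,2r)=\sum_{\substack{m_1,\dots,m_r\ge 0\\ m_1+\cdots+m_r\le n-1}}\ \prod_{\ell=1}^{r}g_{m_\ell}\!\left(\sigma^{m_1+\cdots+m_{\ell-1}}X\right).
\]

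Next I would isolate the relevant part of each factor by splitting $g_m=g_m^{\le P}+e_m$, where $g_m^{\le P}(X)=\sum_{p=1}^{\min(m,P)}2^{-p}f_p(\sigma^{m-p}X)$ keeps only the frequencies $p\le P$ and $e_m(X)=2^{-(m+1)}+\sum_{p>P}2^{-p}f_p(\sigma^{m-p}X)$ is the remainder. Expanding $\prod_\ell(g^{\le P}_{m_\ell}+e_{m_\ell})$ over subsets $S\subseteq\{1,\dots,r\}$ and using that every quantity involved is nonnegative, I would relax the simplex constraint to independent ranges and the shifts to a supremum over the orbit, obtaining for each nonempty $S$ the bound $(\sup_Y\sum_{m<n}e_m(Y))^{|S|}(\sup_Y\sum_{m<n}g^{\le P}_m(Y))^{r-|S|}$. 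Since $\sum_{m<n}e_m(Y)\le 1+n2^{-P}$ and $\sum_{m<n}g^{\le P}_m(Y)\le n$ uniformly in $Y$, expanding $(1+n2^{-P})^{|S|}$ shows that every term with $|S|\ge 1$ contributes at most $O(n^{r-1})+O(n^{r}2^{-P})$, with the constant in front of $n^r2^{-P}$ depending only on $r$; this is where the universal constant $C$ comes from.

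It then remains to identify the surviving term $S=\varnothing$, namely $\sum_{\sum m_\ell\le n-1}\prod_\ell g^{\le P}_{m_\ell}(\sigma^{s_\ell}X)$ with $s_\ell=m_1+\cdots+m_{\ell-1}$, with the target sum. Expanding each $g^{\le P}_{m_\ell}$ into its $f_p$ terms and performing the change of variables $k_\ell=(m_1+\cdots+m_\ell)-p_\ell$, the constraint $m_\ell\ge p_\ell$ becomes $k_\ell\ge k_{\ell-1}+p_{\ell-1}$, so that $f_{p_\ell}$ is evaluated at $\sigma^{k_\ell}X$ along a strictly increasing family of positions. Reversing the labels $(p_1,\dots,p_r)\mapsto(p_r,\dots,p_1)$ --- legitimate because one sums over all of $\{1,\dots,P\}^r$ and the weight $2^{-(p_1+\cdots+p_r)}$ is symmetric --- and writing $j_\ell=k_\ell-\sum_{\ell'<\ell}p_{\ell'}$ turns the inner sum precisely into $S_n(X,p_1,\dots,p_r)$, whose shift pattern $\sigma^{p_{\ell+1}+\cdots+p_r+j_{r-\ell+1}}$ I would check matches term by term. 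The only discrepancies are the passage from the closed simplex $j_1\le\cdots\le j_r$ to the open one $j_1<\cdots<j_r$ and the loss of an $O(P)$ band near the upper endpoint $j_r=n-1$; both are codimension-one defects bounded by $O(n^{r-1})$, and summing over the $P^r$ tuples absorbs them into $K_P$.

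The hard part will be this last step: getting the combinatorics of the reversal and the shift exponents to line up exactly with the definition of $S_n$, and verifying that every simplex-boundary and diagonal correction is genuinely $O(n^{r-1})$ uniformly in $X$, with a constant that is allowed to depend on $P$. By contrast, the error estimate of the second paragraph is robust precisely because all the terms are nonnegative and bounded by $1$, so crude supremum bounds suffice and no cancellation or mixing hypothesis on $\nu$ is ever needed.
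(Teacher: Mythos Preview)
Your argument is correct and takes a genuinely different route from the paper's. The paper proceeds by induction on $r$: it applies the inductive bound directly to $A_{n-m}(\sigma^m X,2r)$ inside the recursion~\eqref{e.reccurence}, then rewrites $\frac12-\frac{b_m}{2}\sum_i b_i 2^{-(m-i)}$ using $b_ib_m=1-2f_{m-i}(\sigma^iX)$ and tracks a sequence of explicit remainders (Equations~\eqref{e.bound_from_induction}--\eqref{e.final_bound}) to produce one extra layer of~$S_n$. You instead unfold the recursion completely to the closed form
\[
A_n(X,2r)=\sum_{m_1+\dots+m_r\le n-1}\prod_{\ell=1}^{r}g_{m_\ell}\!\left(\sigma^{m_1+\cdots+m_{\ell-1}}X\right),\qquad g_m=\frac{1}{2^{m+1}}+\sum_{p=1}^{m}\frac{f_p\circ\sigma^{m-p}}{2^{p}},
\]
and then perform a single splitting $g_m=g_m^{\le P}+e_m$ followed by a single change of variables.

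What each approach buys: the paper's inductive proof stays close to the recursive matrix structure and needs only one new estimate per step, at the price of juggling several numbered inequalities. Your approach is more global and, crucially, exploits the \emph{nonnegativity} of $g_m$, $g_m^{\le P}$ and $e_m$; this lets you bound every cross term $\prod_{\ell\in S}e_{m_\ell}\prod_{\ell\notin S}g_{m_\ell}^{\le P}$ by crude supremum bounds $(1+n2^{-P})^{|S|}n^{r-|S|}$ with no sign-tracking. The identification of the leading term with $\sum 2^{-\sum p_\ell}S_n$ via the reversal $(p_1,\dots,p_r)\mapsto(p_r,\dots,p_1)$ and the substitution $j_\ell=k_\ell-\sum_{\ell'<\ell}p_{\ell'}$ is correct: one checks that the constraint $k_\ell\ge k_{\ell-1}+p_{\ell-1}$ becomes $j_\ell\ge j_{\ell-1}$, and the gap between this and $j_\ell>j_{\ell-1}$, together with the band $n-1-\sum p_\ell<j_r\le n-1$, are both codimension-one corrections of size $O_{r,P}(n^{r-1})$ that fold into~$K_P$. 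The constant in front of $n^r2^{-P}$ coming from the $|S|\ge1$ terms is indeed bounded by $2^r-1$, depending only on~$r$.
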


\begin{remark}
Before proving this lemma, we would like to explain why it is useful for the proof of Theorem~\ref{t.principal}. Here, $P$ is a fixed parameter, and for any such fixed parameter, one can bound the distance between $\displaystyle\lim_{n\rightarrow +\infty} \frac1{n^r}A_n(X,2r)$  -- which is the quantity we wish to understand -- and $\lim_{n\rightarrow +\infty} \frac{1}{n^r}\sum_{1\leq p_1...p_r \leq P} \frac{1}{2^{p_1+...+p_r}}
S_n(X,p_1,...,p_r)$ which we know how to compute. An argument similar to the one in the proof of Theorem~\ref{t.asymptotic_variance} then gives us the desired result.
\end{remark}

\begin{proof}[Proof of Lemma~\ref{l.main}]
We prove this by induction on $r$. The case $r=1$ can easily be obtained from the computations in the proof of Theorem \ref{t.asymptotic_variance}.

We assume this is true for a given integer $r$.
Since 
 $$
 \sum_{M\in\mathcal{F}^{(n)}_{(\alpha^0,\beta^{r+1})}}PMP^{-1}
 =
 \sum_{m=0}^{n-1}\widetilde I_{X_0}...\widetilde I_{X_{m-1}}\widetilde \beta_{X_m}\left(\sum_{M\in\mathcal{F}^{(n-m)}_{(\alpha^0,\beta^r)}(\sigma^m(X))}PMP^{-1}\right).
 $$
 and
  $$
 \widetilde I_{X_0}\cdots \widetilde I_{X_{m-1}}\widetilde\beta_{X_{m}}=\begin{pmatrix}
                                                \frac12-\frac{b_{m}}{2}\sum_{i=0}^{m-1}\frac{b_i}{2^{m-i}}&0\\
                                                -\frac{b_{m}}{2^{m+1}} & 0
                                               \end{pmatrix},
 $$
 we get that
 \begin{equation}\label{e.reccurence}
  A_{n}(X,2r+2)=\sum_{m=0}^{n-1}\left(\frac12-\frac{b_{m}}{2}\sum_{i=0}^{m-1}\frac{b_i}{2^{m-i}}\right)A_{n-m}(\sigma^m(X),2r)
 \end{equation}

From the induction relation, we know that
$$
\left| A_{n-m}(\sigma^m(X),2r) - \sum_{1\leq p_1...p_r \leq P} \frac{1}{2^{p_1+...+p_r}}
S_{n-m}(\sigma^m(X),p_1,...,p_r)\right| 
\leq
K_P n^{r-1}+C\frac{n^r}{2^P}
$$
independently from $m$.

Moreover, for every $X$, 
$$
\sum_{m=0}^{n-1}\left(\frac12-\frac{b_{m}}{2}\sum_{i=0}^{m-1}\frac{b_i}{2^{m-i}}\right)\leq  \frac32 n
$$ 
so 
\begin{multline}\label{e.bound_from_induction}
\left|A_{n}(X,2r+2)-
\sum_{m=0}^{n-1}\left(\frac12-\frac{b_{m}}{2}\sum_{i=0}^{m-1}\frac{b_i}{2^{m-i}}\right)
\sum_{1\leq p_1...p_r \leq P} \frac{1}{2^{p_1+...+p_r}}
S_{n-m}(\sigma^m(X),p_1,...,p_r)\right|\\
\leq
\frac32 \left(K_P n^{r}+2C\frac{n^{r+1}}{2^P}\right).
\end{multline}

Hence, in order to prove the induction step, it is enough to show that there exists $\widetilde C$ independent from $X$, $n$ and $P$, and $\widetilde K_P$, independent from $X$ and $n$ such that
\begin{multline}
\left|
\sum_{1\leq p_1...p_{r+1} \leq P} \frac{1}{2^{p_1+...+p_{r+1}}}
S_n(X,p_1,...,p_{r+1})\right.\\ \left.
-
\sum_{m=0}^{n-1}\left(\frac12-\frac{b_{m}}{2}\sum_{i=0}^{m-1}\frac{b_i}{2^{m-i}}\right)
\sum_{1\leq p_1...p_r \leq P} \frac{1}{2^{p_1+...+p_r}}
S_{n-m}(\sigma^m(X),p_1,...,p_r)
\right|
\leq
\widetilde K_P n^{r} + \widetilde C \frac{n^{r+1}}{2^P}.
\end{multline}
First, notice that the functions $(f_p)_{p\in \N}$ take values in $\{0,1\}$,
$$
\forall X \in \{0,1\}^\N, \ \forall n \in \N, \forall (p_1,...,p_r) \in \N^r, \quad S_n(X,p_1,...,p_r) < n^r.
$$
and that $\displaystyle\sum_{1\leq p_1,...,p_r\leq P}\frac{1}{2^{p_1+...+p_r}}=\left(1-\frac{1}{2^P}\right)^r$
hence, 
\begin{equation}\label{e.boun_last_terms}
\left|
\sum_{m=n-P}^{n-1}\left(\frac12-\frac{b_{m}}{2}\sum_{i=0}^{m-1}\frac{b_i}{2^{m-i}}\right)
\sum_{1\leq p_1...p_r \leq P} \frac{1}{2^{p_1+...+p_r}}
S_{n-m}(\sigma^m(X),p_1,...,p_r)
\right|
<
\frac{3}{2}Pn^r
\end{equation}
so, we only need to study the following quantity:
$$
\sum_{m=0}^{n-P-1}\left(\frac12-\frac{b_{m}}{2}\sum_{i=0}^{m-1}\frac{b_i}{2^{m-i}}\right)
\sum_{1\leq p_1...p_r \leq P} \frac{1}{2^{p_1+...+p_r}}
S_{n-m}(\sigma^m(X),p_1,...,p_r).
$$

To that end, we do the following computations
\begin{align*}
&\sum_{m=0}^{n-P-1}\left(\frac12-\frac{b_{m}}{2}\sum_{i=0}^{m-1}\frac{b_i}{2^{m-i}}\right)
\sum_{1\leq p_1...p_r \leq P} \frac{1}{2^{p_1+...+p_r}}S_{n-m}(\sigma^m(X),p_1,...,p_r)\\
=&\sum_{1\leq p_1...p_r \leq P} \frac{1}{2^{p_1+...+p_r}}
\left
(\sum_{m=0}^{n-P-1}\left(\frac12-\frac{b_{m}}{2}\sum_{i=0}^{m-1}\frac{b_i}{2^{m-i}}\right)
\right)
S_{n-m}(\sigma^m(X),p_1,...,p_r)\\
=&\sum_{1\leq p_1...p_r \leq P} \frac{1}{2^{p_1+...+p_r}}
\left
(\sum_{m=0}^{n-P-1}\frac{S_{n-m}(\sigma^m(X),p_1,...,p_r)}{2}-\sum_{m=0}^{n-P-1}S_{n-m}(\sigma^m(X),p_1,...,p_r)\frac{b_{m}}{2}\sum_{i=0}^{m-1}\frac{b_i}{2^{m-i}}\right).
\end{align*}
Now, notice that
\begin{align*}
&\sum_{m=0}^{n-P-1}S_{n-m}(\sigma^m(X),p_1,...,p_r)\frac{b_{m}}{2}\sum_{i=0}^{m-1}\frac{b_i}{2^{m-i}}\\
=&\sum_{m=0}^{n-P-1}\frac{S_{n-m}(\sigma^m(X),p_1,...,p_r)}{2}\left(1 - \frac{1}{2^{m-1}} \right) - \sum_{m=0}^{n-P-1}S_{n-m}(\sigma^m(X),p_1,...,p_r)\sum_{i=0}^{m-1}\frac{f_{m-i}(\sigma^i(X))}{2^{m-i}}\\
=&\sum_{m=0}^{n-P-1}\frac{S_{n-m}(\sigma^m(X),p_1,...,p_r)}{2}\left(1 - \frac{1}{2^{m-1}} \right) - \sum_{q=1}^{n-P-1}\frac{1}{2^q}\sum_{i=0}^{n-P-1-q}f_q(\sigma^i(X))S_{n-q-i}(\sigma^{q+i}(X),p_1,...,p_r)
\end{align*}
with the change of variables $q=m-i$.
Hence
\begin{align*}
&\sum_{1\leq p_1...p_r \leq P} \frac{1}{2^{p_1+...+p_r}}
\left(
\sum_{m=0}^{n-P-1}\frac{S_{n-m}(\sigma^m(X),p_1,...,p_r)}{2}-\sum_{m=0}^{n-P-1}S_{n-m}(\sigma^m(X),p_1,...,p_r)\frac{b_{m}}{2}\sum_{i=0}^{m-1}\frac{b_i}{2^{m-i}}\right)\\
=&\sum_{1\leq p_1...p_r \leq P} \frac{1}{2^{p_1+...+p_r}}
\left(
\sum_{m=0}^{n-P-1}\frac{S_{n-m}(\sigma^m(X),p_1,...,p_r)}{2^m}
+ \sum_{q=1}^{n-P-1}\frac{1}{2^q}\sum_{i=0}^{n-P-1-q}f_q(\sigma^i(X))S_{n-q-i}(\sigma^{q+i}(X),p_1,...,p_r)\right)
\end{align*}
and notice that 

\begin{equation}\label{e.bound_first_remainder}
\left|
\sum_{1\leq p_1...p_r \leq P} \frac{1}{2^{p_1+...+p_r}}
\sum_{m=0}^{n-P-1}\frac{S_{n-m}(\sigma^m(X),p_1,...,p_r)}{2^m}
\right|
\leq
2n^r.
\end{equation}

So far, from Equations (\ref{e.boun_last_terms}) and (\ref{e.bound_first_remainder}) ,we have computed that
\begin{multline}\label{e.bound_summary}
\left|\sum_{m=0}^{n-1}\left(\frac12-\frac{b_{m}}{2}\sum_{i=0}^{m-1}\frac{b_i}{2^{m-i}}\right)
\sum_{1\leq p_1...p_r \leq P} \frac{1}{2^{p_1+...+p_r}}
S_{n-m}(\sigma^m(X),p_1,...,p_r)\right.\\
\left.-\sum_{1\leq p_1...p_r \leq P} \frac{1}{2^{p_1+...+p_r}}
 \sum_{q=1}^{n-P-1}\frac{1}{2^q}\sum_{i=0}^{n-P-1-q}f_q(\sigma^i(X))S_{n-q-i}(\sigma^{q+i}(X),p_1,...,p_r)\right|
 \leq
 \frac{3P+4}2 n^r.
\end{multline}
Let us now bound
\begin{multline*}
\left|\sum_{1\leq p_1...p_r \leq P} \frac{1}{2^{p_1+...+p_r}}
 \sum_{q=1}^{n-P-1}\frac{1}{2^q}\sum_{i=0}^{n-P-1-q}f_q(\sigma^i(X))S_{n-q-i}(\sigma^{q+i}(X),p_1,...,p_r)\right.\\
 -\left.\sum_{1\leq p_1...p_{r+1} \leq P} \frac{1}{2^{p_1+...+p_{r+1}}}
S_n(X,p_1,...,p_{r+1})
 \right|.
\end{multline*}
First of all, notice that
$$
\left|\sum_{q=P+1}^{n-P-1}\frac{1}{2^q}\sum_{i=0}^{n-P-1-q}f_q(\sigma^i(X))S_{n-q-i}(\sigma^{q+i}(X),p_1,...,p_r)\right|\leq\frac{n^{r+1}}{2^P}
$$
hence
\begin{equation}\label{e.bound_last_terms_bis}
\left|\sum_{1\leq p_1...p_r \leq P} \frac{1}{2^{p_1+...+p_r}}\sum_{q=P+1}^{n-P-1}\frac{1}{2^q}\sum_{i=0}^{n-P-1-q}f_q(\sigma^i(X))S_{n-q-i}(\sigma^{q+i}(X),p_1,...,p_r)\right|\leq\frac{n^{r+1}}{2^P}.
\end{equation}

Finally, let us write
$$
S_n(X,q,p_1,...p_r)=\sum_{i=0}^{n-1}f_q(\sigma^i(X))S_{n-q-i}(\sigma^{q+i}(X),p_1,...,p_r)
$$
and notice  that 
\begin{equation}\label{e.bound_second_remainder}
\forall q \in \{1,...,P\}, \ \left|\sum_{i=n-P-q}^{n-1}f_q(\sigma^i(X))S_{n-q-i}(\sigma^{q+i}(X),p_1,...,p_r)\right|\leq 2Pn^r
\end{equation}

So, Equations (\ref{e.bound_last_terms_bis}) and (\ref{e.bound_second_remainder}) together yield
\begin{multline}\label{e.final_bound}
\left|\sum_{1\leq p_1...p_r \leq P} \frac{1}{2^{p_1+...+p_r}}
 \sum_{q=1}^{n-P-1}\frac{1}{2^q}\sum_{i=0}^{n-P-1-q}f_q(\sigma^i(X))S_{n-q-i}(\sigma^{q+i}(X),p_1,...,p_r)\right.\\
 \left.-\sum_{1\leq p_1...p_{r+1} \leq P} \frac{1}{2^{p_1+...+p_{r+1}}}
S_n(X,p_1,...,p_{r+1})
 \right|
 \leq 
 2Pn^r+\frac{n^{r+1}}{2^P}
\end{multline}
Finally, putting together Equations (\ref{e.bound_from_induction}), (\ref{e.bound_summary}) and (\ref{e.final_bound}) yields
$$
\left| A_{n}(X,2r+2) - \sum_{1\leq p_1...p_{r+1} \leq P} \frac{1}{2^{p_1+...+p_{r+1}}}
S_n(X,p_1,...,p_{r+1})\right| 
\leq
\left(\frac{3K_P+5P+4}{2}\right)n^r +\left(2C+1\right)\frac{n^{r+1}}{2^P}
$$
which proves the induction step and thus, the lemma.
\end{proof}

Let us finally state a classical lemma in ergodic theory.

\begin{lemma}\label{l.ergodic_k}
Let $(X,\mathcal{B},\nu,T)$ be a measured dynamical system, $\nu$ being a $T$ invariant ergodic probability measure. Let $g_1,...,g_r$ be functions in $L^1(X,\nu)$, then
$$
\nu-a.e. x \in X,\ \lim_{n\rightarrow +\infty} \frac{1}{n^r}\sum_{0\leq i_1<...<i_r \leq n-1}g_1\circ T^{i_1}(x)...g_r\circ T^{i_r}(x)=\frac{1}{r!}\prod_{j=1}^r\int_X g_j d\nu.
$$
\end{lemma}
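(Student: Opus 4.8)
The plan is to prove the statement by induction on $r$, the base case $r=1$ being exactly Birkhoff's pointwise ergodic theorem (here the factor $1/r!$ equals $1$). Throughout I would fix a full-measure set on which every ergodic average occurring below converges, and argue at a single such point $x$.

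For the inductive step I would isolate the largest index. Setting
$$
b_k(x):=\sum_{0\le i_1<\dots<i_{r-1}\le k-1}g_1\circ T^{i_1}(x)\cdots g_{r-1}\circ T^{i_{r-1}}(x),
$$
the $r$-fold sum rewrites, upon putting $i_r=k$, as $\sum_{k=0}^{n-1}g_r\circ T^k(x)\,b_k(x)$. By the induction hypothesis applied to $g_1,\dots,g_{r-1}$ one has $b_k(x)/k^{r-1}\to c:=\frac{1}{(r-1)!}\prod_{j=1}^{r-1}\int g_j\,d\nu$ for $\nu$-a.e. $x$; writing $b_k(x)=(c+\varepsilon_k)k^{r-1}$ with $\varepsilon_k\to 0$ splits the sum into a main term $c\sum_{k<n}k^{r-1}g_r\circ T^k(x)$ and an error term $\sum_{k<n}\varepsilon_k k^{r-1}g_r\circ T^k(x)$.

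The analytic heart is a weighted ergodic theorem, namely $\frac{1}{n^r}\sum_{k=0}^{n-1}k^{r-1}g_r\circ T^k(x)\to \frac1r\int g_r\,d\nu$. I would deduce it from Birkhoff's theorem for $g_r$ by Abel summation, equivalently a Toeplitz regular-weights argument: since $\frac1m\sum_{k<m}g_r\circ T^k(x)\to\int g_r\,d\nu$ and the weights $w_k=k^{r-1}$ satisfy $\sum_{k<n}w_k\sim n^r/r$ and vary regularly, the weighted averages converge to the same limit, producing the factor $1/r$. Multiplying by $c$ and dividing by $n^r$ yields precisely $\frac{1}{r!}\prod_{j=1}^r\int g_j\,d\nu$. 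For the error term, when the $g_j$ are bounded one bounds it by $\|g_r\|_\infty\,\frac{1}{n^r}\sum_{k<n}|\varepsilon_k|k^{r-1}$, which tends to $0$ by the same weighted Ces\`aro argument since $\varepsilon_k\to 0$; this settles the bounded case completely.

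The main obstacle is the passage to general $L^1$ functions, where $g_r\circ T^k(x)$ may be unbounded and the crude estimate on the error term breaks down. I would handle it by truncation: write each $g_j=g_j^{(N)}+h_j^{(N)}$ with $g_j^{(N)}$ bounded and $\|h_j^{(N)}\|_{L^1(\nu)}$ small, expand the $r$-fold sum multilinearly, treat the all-bounded term by the argument above, and control each remaining term—every one of which carries at least one factor $h_j^{(N)}$—by factoring out that $L^1$ factor and invoking Birkhoff's theorem for $|h_j^{(N)}|$ together with the already-established bounded estimates, so that its contribution to $\frac{1}{n^r}(\cdots)$ is $O(\|h_j^{(N)}\|_{L^1(\nu)})$ uniformly in $n$; letting $N\to\infty$ then removes it. Turning these uniform $L^1$ bounds on the multiple averages into rigorous estimates, as opposed to the transparent bounded case, is the only genuinely delicate point.
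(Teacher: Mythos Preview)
Your induction on $r$ with the inner sum handled by Abel summation (equivalently, a weighted Birkhoff/Toeplitz argument) is exactly the route the paper indicates in its one-line proof, and your treatment of the main term and of the error term when $g_r$ is bounded is correct.

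The passage to general $L^1$ functions via truncation of \emph{all} the $g_j$ followed by multilinear expansion does not work as you sketch it. A cross term carrying a single unbounded factor $h_j^{(N)}$ and $r-1$ bounded factors $g_i^{(N)}$ is, after the crude estimate $|g_i^{(N)}|\le N$, controlled only by a constant times $N^{r-1}\|h_j^{(N)}\|_{L^1}$; for a function that is merely in $L^1$ the norm $\|h_j^{(N)}\|_{L^1}$ may decay arbitrarily slowly, so this product need not tend to $0$ as $N\to\infty$. Terms with several $h^{(N)}$ factors are worse still, since you cannot invoke the lemma you are proving to control them. The repair is simpler than truncation and stays inside your own inductive decomposition: since $(\varepsilon_k)_k$ converges it is bounded, and your weighted Abel/Birkhoff argument applies equally well to $|g_r|\in L^1$. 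Splitting the error sum at a threshold $K$ chosen so that $|\varepsilon_k|<\delta$ for $k>K$ gives
\[
\limsup_{n\to\infty}\frac{1}{n^r}\Bigl|\sum_{k<n}\varepsilon_k\,k^{r-1}\,g_r\circ T^k(x)\Bigr|\;\le\;\frac{\delta}{r}\int_X|g_r|\,d\nu,
\]
and letting $\delta\to 0$ closes the induction directly for $L^1$ functions with no truncation at all. (Incidentally, in the paper's actual application the $g_j$ are $\{0,1\}$-valued, so the bounded case you already handled would suffice there.)
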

\begin{proof}
This lemma is a consequence of Birkhoff's ergodic theorem. It can be proved by induction on $r$, using an Abel transform.
\end{proof}
 
 Now, as stated in the beginning of the present section, we must show that
 $$
\nu-a.e. X \in \{0,1\}^\N, \ \lim_{n\rightarrow+\infty}\frac{1}{n^r}\begin{pmatrix}\frac12 & -\frac12 \end{pmatrix}\sum_{M\in\mathcal{F}^{(n)}_{(\alpha^0,\beta^r)}(X)}PMP^{-1}\begin{pmatrix}2 \\0 \end{pmatrix}=\frac{V_\nu ^r}{r!}.
$$
In view of Lemma \ref{l.matrix_form}, we must actually show that
$$
\nu-a.e. X \in \{0,1\}^\N, \ \lim_{n\rightarrow+\infty}\frac{1}{n^r}A_n(X,2r)=\frac{V_\nu ^r}{r!}.
$$
Notice that, for any $P$ in $\N$, and any $(p_1,...,p_r)$ in $\{1,...,P\}^r$, Lemma \ref{l.ergodic_k} yields
$$
\nu-a.e. X \in \{0,1\}^\N, \ \lim_{n\rightarrow+\infty}\frac{1}{n^r}S_n(X,p_1,...,p_r)=\frac{1}{r!}\mathcal{F}_{p_1}...\mathcal{F}_{p_r}
$$
where we recall that $\mathcal{F}_i=\int f_i d\nu$.
There being a finite number of such tuples $(p_1,...,p_r)$, we have
$$
\nu-a.e. X \in \{0,1\}^\N, \ \lim_{n\rightarrow+\infty}\frac{1}{n^r}\sum_{1\leq p_1,..., p_r \leq P}\frac{1}{2^{p_1+...+p_r}}S_n(X,p_1,...,p_r)=\frac{1}{r!}\left(\sum_{p=1}^P\frac{\mathcal{F}_{p}}{2^p}\right)^r
$$
and we denote this limit $l_P$.
From Lemma \ref{l.main}, and in the same manner as for the computation of the asymptotic variance, we have
$$
\nu-a.e. X \in \{0,1\}^\N, \ \left|\lim_{n\rightarrow+\infty}\frac{1}{n^r}A_n(X,2r) - l_P \right| \leq \frac{C}{2^P}
$$
and this being true for any $P$, $C$ being a constant independent from $P$, and since $\displaystyle\lim_{P\rightarrow +\infty}l_P=\frac{V_\nu^r}{r!}$, this concludes the proof that the moments of even order converge towards the moments of the centered normal law and thus, together with Subsection \ref{s.moments_impairs}, proves Theorem \ref{t.principal}.
\flushright $\square$ \flushleft

\bibliographystyle{abbrv}
\bibliography{biblio}

\end{document}